\newcommand{\E}{\mathbb E}
\newcommand{\R}{\mathbb{R}}
\newcommand{\N}{\mathbb{N}}
\newcommand{\Z}{\mathbb{Z}}
\renewcommand{\P}{\mathbb{P}}
\newcommand{\relint}{\mathop{\mathrm{relint}}}
\newcommand{\interior}{\mathop{\mathrm{int}}}
\newcommand{\conv}{\mathop{\mathrm{conv}}\nolimits}
\newcommand{\aff}{\mathop{\mathrm{aff}}\nolimits}
\newcommand{\cone}{\mathop{\mathrm{cone}}\nolimits}
\newcommand{\dd}{{\rm d}}
\newcommand{\cF}{\mathcal{F}}
\newcommand{\eps}{\varepsilon}
\newcommand{\eqdistr}{\stackrel{d}{=}}
\newcommand{\ind}{\mathbbm{1}}
\theoremstyle{plain}
\newtheorem{theorem}{Theorem}[section]
\newtheorem{lemma}[theorem]{Lemma}
\theoremstyle{definition}
\newtheorem{example}[theorem]{Example}
\theoremstyle{remark}
\newtheorem{remark}[theorem]{Remark}
\begin{document}

\title
[Inclusion-exclusion principles]
{Inclusion-Exclusion principles for convex hulls and the Euler relation}

\author{Zakhar Kabluchko}

\address{Zakhar Kabluchko, Universit\"at M\"unster, Institut f\"ur Mathematische Statistik, Orl\'{e}ans-Ring 10, 48149 M\"unster, Germany} \email{zakhar.kabluchko@uni-muenster.de}

\author{G\"unter Last}
\address{G\"unter Last, Karlsruher Institut f\"ur Technologie, Englerstr.\ 2, D-76131 Karlsruhe, Germany}
\email{guenter.last@kit.edu }

\author{Dmitry Zaporozhets}
\address{Dmitry Zaporozhets\\
St.\ Petersburg Department of
Steklov Institute of Mathematics,
Fontanka~27,
 191011 St.\ Petersburg,
Russia}
\email{zap1979@gmail.com}

\begin{abstract}
Consider $n$ points $X_1,\ldots,X_n$ in $\R^d$ and denote their convex hull by $\Pi$.
We prove a number of inclusion-exclusion identities for the system of convex hulls $\Pi_I:=\conv(X_i\colon i\in I)$, where $I$ ranges over all subsets of $\{1,\ldots,n\}$.
For instance, denoting by $c_k(X)$ the number of $k$-element subcollections of $(X_1,\ldots,X_n)$ whose convex hull contains a point $X\in\R^d$, we prove that
$$
c_1(X)-c_2(X)+c_3(X)-\ldots + (-1)^{n-1} c_n(X) = (-1)^{\dim \Pi}
$$
for \textit{all} $X$ in the relative interior of $\Pi$. This confirms a conjecture of R.\ Cowan [Adv.\ Appl.\ Probab.,\ 39(3):630--644, 2007] who proved the above formula  for \textit{almost all} $X$.
We establish similar results for the number of polytopes $\Pi_J$ containing a given polytope $\Pi_I$ as an $r$-dimensional face, thus proving another conjecture of R.\ Cowan [Discrete Comput.\ Geom.,\ 43(2):209--220, 2010]. As a consequence, we derive inclusion-exclusion identities for the intrinsic volumes and the face numbers of the polytopes $\Pi_I$.
The main tool in our proofs is a formula for the alternating sum of the face numbers of a convex polytope intersected by an affine subspace. This formula generalizes the classical Euler--Schl\"afli--Poincar\'e relation and is of independent interest.
\end{abstract}

\keywords{Convex hulls, inclusion-exclusion principle, Cowan's formula, Euler characteristic, Euler relation, polytopes, faces, intrinsic volumes}

\subjclass[2010]{Primary: 	52A05;  Secondary: 	52A22, 	52B11}
\maketitle

\section{Statement of results}
\subsection{Introduction}
Let $X_1,\ldots,X_n$ be a finite collection\footnote{Since we do not require the points to be distinct, we use the notions ``collection'', ``subcollection'', etc.\ rather than ``set'', ``subset'', etc.} of points in $\R^d$.
Denote its convex hull by $\Pi$:
$$
\Pi:= \conv(X_1,\ldots,X_n) = \left\{\sum_{i=1}^n \lambda_i X_i \colon \lambda_1,\ldots,\lambda_n\geq 0,  \sum_{i=1}^n \lambda_i = 1\right\}.
$$
More generally, for any set $I\subset\{1,\ldots,n\}$ we write
$$
\Pi_I:=\conv(X_i\colon i\in I) 
$$
for the convex hull of the points $X_i, i\in I$. In this paper, we are interested in various inclusion-exclusion relations satisfied by the system of polytopes $\Pi_I$, where $I$ ranges over all subsets of  $\{1,\ldots,n\}$.

Recall that a \textit{polytope} is a convex hull of a finite set of points. By definition, it is convex and compact. The \textit{interior} of a polytope $P$ is denoted by $\interior P$.  The \textit{relative interior} of a polytope $P$, denoted by $\relint  P$, is the interior of $P$ with respect to its affine hull. For example, the relative interior of a point is this point itself. For these and other standard definitions from convex geometry we refer to the monographs by Schneider~\cite{schneider_book}, Schneider and Weil~\cite{schneider_weil_book}, and Gr\"unbaum~\cite{gruenbaum_book}.
We denote by $\cF_k(P)$ the set of $k$-dimensional (closed) faces of $P$, and by $f_k(P)$ their number. Let $\dim P$ be the dimension of $P$ and write $\cF_\bullet(P)=\cup_{k=0}^{\dim P} \cF_k(P)$ for the set of all faces of $P$ including $P$ itself. Of central importance for the present paper is the classical Euler--Schl\"afli--Poincar\'e relation (see~\cite[page~626]{schneider_weil_book} or~\cite[page~130]{gruenbaum_book}) which states that for every polytope $P$,
\begin{equation}\label{eq:euler_relation}
\sum_{F\in \cF_{\bullet} (P)} (-1)^{\dim F} = \sum_{k=0}^{\dim P} (-1)^k f_k(P) = 1.
\end{equation}

\subsection{Cowan's formula}
Recall that $X_1,\ldots,X_n$ is a finite collection of points in $\R^d$, and additionally let $X$ be any point in $\R^d$.  For $k=1,\ldots,n$ denote by $c_k(X)$ the number of $k$-element subcollections of $(X_1,\ldots,X_n)$ containing $X$ in their convex hull:
\begin{equation}\label{eq:def_c_k}
c_k(X) = \# \{I\subset\{1,\ldots,n\}\colon \# I = k, X\in \Pi_I\}.
\end{equation}
Here, $\#B$ is the number of elements in a set $B$.
Cowan~\cite{cowan1}  proved that 
\begin{equation}\label{eq:cowan_exceptional}
\sum_{k=1}^{n} (-1)^{k-1} c_k(X)
=
\begin{cases}
(-1)^{\dim \Pi}, &\text{if } X \in  (\relint \Pi)\backslash \mathcal E,\\
0, &\text{if } X\notin \relint  \Pi,
\end{cases}
\end{equation}
where  $\mathcal E$ is some ``exceptional set'' of codimension $2$. Cowan also conjectured that, in fact, the first case of the formula holds for \emph{all} $X\in \relint  \Pi$ and proved this for $d=2$. Another proof of Cowan's formula can be found in the book of Schneider and Weil~\cite[p.~309--310]{schneider_weil_book}, but there is again an exceptional set, namely the union of all $(d-2)$-dimensional affine subspaces spanned by $X_1,\ldots,X_n$. Our first result confirms Cowan's conjecture for all $d\in\N$.

\begin{theorem}\label{theo:cowan}
For any finite collection of points $X_1,\ldots,X_n$ in $\R^d$ and for all $X\in \R^d$ we have
\begin{equation}\label{eq:cowan}
\sum_{k=1}^{n} (-1)^{k-1} c_k(X)
=
\begin{cases}
(-1)^{\dim \Pi}, &\text{if } X \in  \relint \Pi,\\
0, &\text{if } X\notin \relint  \Pi.
\end{cases}
\end{equation}
\end{theorem}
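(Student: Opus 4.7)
The plan is to convert the problem into the Euler characteristic of a simplicial complex and compute it via the Nerve Lemma. After translating so that $X=0$ (and handling separately the trivial case $X\notin\aff\Pi$, where every $\Pi_I\subseteq\aff\Pi$ misses $X$ and all $c_k(X)=0$), I introduce the simplicial complex
$$\mathcal D(X):=\{I\subseteq\{1,\ldots,n\}:X\notin\Pi_I\},$$
which is downward-closed because $I\mapsto\Pi_I$ is monotone. Combining the elementary identity $\sum_{\emptyset\neq I\subseteq\{1,\ldots,n\}}(-1)^{|I|-1}=1$ with a split of the sum according to whether $X\in\Pi_I$ gives
$$\sum_{k=1}^n(-1)^{k-1}c_k(X)=1-\chi(\mathcal D(X)),$$
so the theorem reduces to proving $\chi(\mathcal D(X))=1-(-1)^{\dim\Pi}$ when $X\in\relint\Pi$ and $\chi(\mathcal D(X))=1$ otherwise.

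To compute $\chi(\mathcal D(X))$ I realize $\mathcal D(X)$ as a nerve. By Hahn--Banach, $I\in\mathcal D(X)$ iff some hyperplane strictly separates $\{X_i:i\in I\}$ from $X$, which is equivalent to $\bigcap_{i\in I}V_i\neq\emptyset$, where $V_i:=\{v\in S^{d-1}:\langle v,X_i-X\rangle<0\}$ is an open hemisphere (empty exactly when $X_i=X$). Nonempty finite intersections of the $V_i$ are open spherically convex sets, hence contractible, so the cover $\{V_i\}$ is good and the Nerve Lemma yields
$$\mathcal D(X)\simeq\bigcup_{i=1}^n V_i=S^{d-1}\setminus W,\qquad W:=\{v\in S^{d-1}:\langle v,X_i-X\rangle\geq 0\text{ for all }i\}.$$
If one prefers to bypass the topological Nerve Lemma, the additivity of the Euler characteristic on constructible sets, applied via inclusion--exclusion to the $V_i$, delivers $\chi(\bigcup V_i)=\chi(\mathcal D(X))$ directly.

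It remains to analyze $W$, after reducing to $\dim\Pi=d$ by restricting to $\aff\Pi$. If $X\in\interior\Pi$, there is no supporting hyperplane of $\Pi$ at $X$, so $W=\emptyset$, $\bigcup V_i=S^{d-1}$, and $\chi(\mathcal D(X))=1+(-1)^{d-1}$, yielding the desired value $(-1)^{\dim\Pi}$. Otherwise $W$ is a nonempty proper closed spherical polytope; since the vectors $X_i-X$ span $\R^d$ (as $\dim\Pi=d$), $W$ is contained in an open hemisphere and is therefore contractible, and Alexander duality in $S^{d-1}$ forces $S^{d-1}\setminus W$ to have trivial reduced homology, so $\chi(\mathcal D(X))=1$ and the alternating sum vanishes. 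The main obstacle I anticipate is the careful bookkeeping of degenerate configurations: indices $i$ with $X_i=X$ (where $V_i=\emptyset$ and the vertex drops out of the nerve without affecting the identity), the reduction to $\aff\Pi$ when $\dim\Pi<d$, and verifying that $W$ really is a contractible spherical polytope so that Alexander duality is legitimately applied.
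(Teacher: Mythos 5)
Your proof is correct, and it takes a genuinely different route from the one in the paper. The paper linearizes the problem: it maps the standard $(n-1)$-dimensional simplex onto the configuration via $Ae_i=X_i$, observes (Lemma~\ref{lem:lem}) that $X\in\Pi_I$ exactly when the affine subspace $A^{-1}(X)$ meets the corresponding face of the simplex, and then applies the generalized Euler relation of Theorem~\ref{theo:euler_intersection}, whose proof rests on Groemer's Euler characteristic for ro-polyhedra (Lemma~\ref{lem:chi_property}). You instead dualize at $X$: the downward-closed family $\mathcal{D}(X)$ of index sets whose hull misses $X$ is the nerve of the open hemispheres $V_i\subset S^{d-1}$, the elementary identity $\sum_{k}(-1)^{k-1}c_k(X)=1-\chi(\mathcal{D}(X))$ reduces everything to one Euler characteristic, and the nerve lemma plus Alexander duality convert that into the topology of $S^{d-1}\setminus W$, where $W$ is the spherical normal cone of $\Pi$ at $X$. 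The degenerate positions of $X$ (which produced the exceptional sets in Cowan's and Schneider--Weil's arguments, and which the paper handles through the valuation identity behind Theorem~\ref{theo:euler_intersection}) are absorbed in your approach by a single observation: when $X\notin\interior\Pi$, the set $W$ is a nonempty compact spherically convex subset of an open hemisphere---pointedness of the dual cone, because the vectors $X_i-X$ span $\R^d$---hence contractible. What each approach buys: the paper's stays inside elementary convex valuation theory with no homotopy-theoretic input, and yields Theorems~\ref{theo:euler_intersection} and~\ref{theo:euler_touch} as reusable tools for the later face and intrinsic-volume identities; yours is self-contained for this one theorem, needs no auxiliary simplex, and makes the phase factor transparent, since $(-1)^{\dim\Pi}=1-\chi(S^{d-1})$ after reducing to $\dim\Pi=d$.

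One caveat on a side remark: your parenthetical claim that additivity of the Euler characteristic on constructible sets ``delivers $\chi(\bigcup_i V_i)=\chi(\mathcal{D}(X))$ directly'' is not right as stated. The additive (compactly supported, Groemer-type) Euler characteristic of a nonempty open spherically convex set of dimension $d-1$ is $(-1)^{d-1}$, not $1$, so inclusion--exclusion gives $\chi_c\bigl(\bigcup_i V_i\bigr)=(-1)^{d-1}\chi(\mathcal{D}(X))$. One can still conclude from here, computing $\chi_c(S^{d-1}\setminus W)=\chi_c(S^{d-1})-\chi_c(W)$ with $\chi_c(W)\in\{0,1\}$---indeed this variant is closest in spirit to the paper's proof---but the homotopy-invariant and the additive Euler characteristics must not be interchanged on open sets. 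Your main argument, which uses the nerve lemma and the homotopy-invariant characteristic throughout, is unaffected by this.
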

This formula should be compared to the well-known inclusion-exclusion principles which state that for arbitrary subsets $A_1,\ldots,A_n$ of a set $\Omega$,
\begin{align}
&\sum_{\varnothing\neq I\subset\{1,\ldots,n\}} (-1)^{\# I-1} \ind_{\cap_{i\in I} A_i} = \ind_{A_1\cup\ldots\cup A_n},\label{eq:inclusion1}\\
&\sum_{\varnothing\neq I\subset\{1,\ldots,n\}} (-1)^{\# I-1} \ind_{\cup_{i\in I} A_i} = \ind_{A_1\cap\ldots\cap A_n},\label{eq:inclusion2}
\end{align}
where $\ind_B$ denotes the indicator function of a set $B$.
In terms of indicator functions, Cowan's formula can be written as
\begin{equation}\label{eq:cowan_indicator}
\sum_{\varnothing\neq I\subset\{1,\ldots,n\}} (-1)^{\# I-1} \ind_{\Pi_I} = (-1)^{\dim \Pi} \ind_{\relint  \Pi}.
\end{equation}
This is clearly analogous to~\eqref{eq:inclusion2} if we consider the convex hull as an analogue of the union, and the interior of the convex hull with a ``phase factor'' $(-1)^{\dim \Pi}$ as an analogue of the intersection.  The next theorem states a ``dual'' Cowan's formula which is analogous  to~\eqref{eq:inclusion1}.
\begin{theorem}\label{theo:cowan_dual}
For any finite collection of points $X_1,\ldots,X_n$ in $\R^d$ we have
\begin{equation}\label{eq:cowan_dual}
\sum_{\varnothing\neq I\subset\{1,\ldots,n\}} (-1)^{\# I -1}  (-1)^{\dim \Pi_I}\ind_{\relint  \Pi_I}  = \ind_{\Pi}.
\end{equation}
\end{theorem}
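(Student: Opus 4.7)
The plan is to derive Theorem~\ref{theo:cowan_dual} directly from Theorem~\ref{theo:cowan} by substitution and a M\"obius-type cancellation over the Boolean lattice of subsets of $\{1,\ldots,n\}$. Fix an arbitrary $X\in\R^d$. First, I would apply Theorem~\ref{theo:cowan} to every subcollection $(X_i\colon i\in I)$, for every nonempty $I\subseteq\{1,\ldots,n\}$; this yields the pointwise identity
$$
(-1)^{\dim \Pi_I}\ind_{\relint \Pi_I}(X)
=\sum_{\varnothing\neq J\subseteq I}(-1)^{\#J-1}\ind_{\Pi_J}(X),
$$
valid for every $X\in\R^d$ (the case $X\notin\relint\Pi_I$ is the second line of \eqref{eq:cowan}). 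Crucially, Theorem~\ref{theo:cowan} supplies this identity \emph{everywhere} on $\R^d$, which is what justifies the substitution below.

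Next, I would plug this into the left-hand side of \eqref{eq:cowan_dual} and interchange the two finite sums, transforming the expression into
$$
\sum_{\varnothing\neq J\subseteq\{1,\ldots,n\}}(-1)^{\#J-1}\ind_{\Pi_J}(X)\left(\sum_{I\colon J\subseteq I\subseteq\{1,\ldots,n\}}(-1)^{\#I-1}\right).
$$
Writing the supersets of $J$ as $I=J\cup K$ with $K\subseteq\{1,\ldots,n\}\setminus J$ and expanding by the binomial theorem, the bracketed inner sum evaluates to $(-1)^{\#J-1}(1-1)^{n-\#J}$, which vanishes unless $J=\{1,\ldots,n\}$, in which case it equals $(-1)^{n-1}$. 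Consequently only the term $J=\{1,\ldots,n\}$ contributes, giving $(-1)^{n-1}\cdot(-1)^{n-1}\ind_{\Pi}(X)=\ind_{\Pi}(X)$, which is the right-hand side of \eqref{eq:cowan_dual}.

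Given Theorem~\ref{theo:cowan}, I do not foresee any serious obstacle: the argument is essentially a one-line M\"obius-type inversion, and it is uniform in $X$. As a sanity check, on the relative boundary of $\Pi$ the two sides both equal $1$, at a vertex $X_i$ only the singleton term on the left survives, and outside $\Pi$ every $\ind_{\Pi_I}$ vanishes, so all three cases are consistent. The real content of Theorem~\ref{theo:cowan_dual} is thus already absorbed into Theorem~\ref{theo:cowan}, and the only task here is the bookkeeping above.
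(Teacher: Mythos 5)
Your proposal is correct and follows exactly the paper's own argument: apply Theorem~\ref{theo:cowan} in its indicator form~\eqref{eq:cowan_indicator} to each $\Pi_I$, interchange the two finite sums, and observe that the inner sum $\sum_{I\supseteq J}(-1)^{\#I-1}$ vanishes unless $J=\{1,\ldots,n\}$. No gaps; the bookkeeping and the final evaluation match the paper's Section on the proof of Theorem~\ref{theo:cowan_dual}.
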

Given~\eqref{eq:cowan}, we can obtain~\eqref{eq:cowan_dual} by simple algebraic manipulations (M\"obius inversion), see Section~\ref{subsec:proof_dual_cowan}. The proof of~\eqref{eq:cowan} is non-trivial and will be given in Section~\ref{subsec:proof_cowan}.

\subsection{Generalized Euler relation}
The left-hand side of~\eqref{eq:cowan} looks very much like the Euler characteristic, but it seems that the original proof of Cowan~\cite{cowan1} does not establish any direct connection between his formula and the theory of additive functionals.
We will follow a different method. Referring to Section~\ref{subsec:proof_cowan} for more details, we briefly describe the essence of our approach.
Consider a non-degenerate simplex with $n$ vertices located in some $(n-1)$-dimensional linear space $H$. Define an affine map $A:H\to\R^d$ by sending the vertices of the simplex to the points $X_1,\ldots,X_n$. Then, the polytopes $\Pi_I$, $I\subset\{1,\ldots,n\}$, are the images of the faces of the simplex. Passing to the preimages, we can interpret Cowan's formula as a statement about the intersections between the affine subspace $A^{-1}(X)$ and the faces of the simplex.

The following general fact (which may be of independent interest) is the main ingredient of our proofs. Although it may be known, we were unable to find it in the literature.

\begin{theorem}\label{theo:euler_intersection}
Let $T$ be a polytope in $\R^m$ with non-empty interior $\interior T$. Let $L\subset\R^m$ be an affine subspace of dimension $m-d$.
Denote by $a_k$ the number of $k$-dimensional faces of $T$ which are intersected by $L$, where $k=0,\ldots, m$. Then,
\begin{equation}\label{eq:euler_intersection}
\sum_{k=0}^{m} (-1)^k a_k
=
\begin{cases}
(-1)^{d}, &\text{if } L\cap \interior  T\neq \varnothing,\\
0, &\text{if } L\cap \interior  T = \varnothing.
\end{cases}
\end{equation}
\end{theorem}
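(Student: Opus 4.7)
My plan is to evaluate the alternating sum $\sum_{k=0}^m (-1)^k a_k = \sum_{F\in\cF_\bullet(T)} (-1)^{\dim F}\ind_{F\cap L\neq\varnothing}$ by first computing $\ind_{F\cap L\neq\varnothing}$ face-by-face through a stratification of $F\cap L$, and then exchanging the order of summation and applying the classical Euler--Schl\"afli--Poincar\'e relation \eqref{eq:euler_relation} to the face figure of each face $G$ of $T$.

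The starting point is the stratification
\[
F\cap L \;=\; \bigsqcup_{G\in\cF_\bullet(F)} (\relint G \cap L)
\]
into locally closed pieces. For each face $G$ of $F$ the set $\relint G\cap L$ is either empty or a nonempty relatively open convex subset of the affine subspace $\aff G\cap L$; in the nonempty case it is homeomorphic to $\R^{k_G}$ with $k_G := \dim(\aff G\cap L)$, and hence has compactly supported Euler characteristic $(-1)^{k_G}$. Since $F\cap L$ is itself a compact polytope, $\chi_c(F\cap L)=\ind_{F\cap L\neq\varnothing}$, and additivity of $\chi_c$ over the above stratification yields the key identity
\begin{equation}\label{eq:key-identity-plan}
\ind_{F\cap L\neq\varnothing} \;=\; \sum_{G\in\cF_\bullet(F)} (-1)^{\dim(\aff G\cap L)}\,\ind_{\relint G\cap L\neq\varnothing}.
\end{equation}

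Plugging \eqref{eq:key-identity-plan} into $\sum_F (-1)^{\dim F}\ind_{F\cap L\neq\varnothing}$ and swapping the order of summation rewrites the left-hand side of \eqref{eq:euler_intersection} as
\[
\sum_{G\in\cF_\bullet(T)} (-1)^{\dim(\aff G\cap L)}\,\ind_{\relint G\cap L\neq\varnothing} \sum_{F:\, G\subseteq F\subseteq T} (-1)^{\dim F}.
\]
The inner sum is a standard computation with face figures: for $G\subsetneq T$ the interval $[G,T]$ in $\cF_\bullet(T)$ is isomorphic to the face lattice of the face figure $T/G$ together with its empty face, and applying \eqref{eq:euler_relation} to $T/G$ collapses the sum to zero; only $G=T$ survives. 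Since $T$ has nonempty interior, $\aff T\cap L=L$ and $\relint T=\interior T$, so the surviving term equals $(-1)^{m-d}\cdot(-1)^m\cdot\ind_{L\cap\interior T\neq\varnothing}=(-1)^d\ind_{L\cap\interior T\neq\varnothing}$, which is exactly \eqref{eq:euler_intersection}.

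The main obstacle in this plan is establishing \eqref{eq:key-identity-plan} cleanly. One route invokes the compactly supported Euler characteristic on semi-algebraic (or definable) sets, together with the fact that every nonempty open convex subset of $\R^k$ is homeomorphic to $\R^k$ and hence has $\chi_c=(-1)^k$. A more self-contained alternative proves \eqref{eq:key-identity-plan} by induction on $\dim F$: the base case $\dim F=0$ is immediate, and the inductive step uses \eqref{eq:euler_relation} applied to the polytope $F\cap L$ itself, after identifying each face of $F\cap L$ with the unique minimal face $G\subseteq F$ whose relative interior meets $L$. Once \eqref{eq:key-identity-plan} is available the remainder is purely formal.
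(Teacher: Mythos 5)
Your proposal is correct, and it runs on the same engine as the paper's proof: additivity of the Euler characteristic on relatively open polyhedral pieces, the value $(-1)^k$ on a nonempty relatively open convex set of dimension $k$, and the generalized Euler relation $\sum_{F\supseteq G}(-1)^{\dim F}=0$ for proper faces $G$ (the paper's \eqref{eq:euler_generalized}). The difference is the order in which these are combined. The paper uses \eqref{eq:euler_generalized} \emph{first}, to write $\ind_{\interior T}$ as a signed sum of indicators of closed faces, multiplies by $\ind_L$, and then applies the valuation once via Lemma~\ref{lem:chi_property}; the only open set whose Euler characteristic it must evaluate is $L\cap\interior T$. You apply the valuation separately to each $F\cap L$, expanding $\ind_{F\cap L\neq\varnothing}=\chi(F\cap L)$ over the strata $\relint G\cap L$, and invoke \eqref{eq:euler_generalized} only at the end to collapse the double sum; this requires knowing $\chi(\relint G\cap L)=(-1)^{\dim(\aff G\cap L)}$ for \emph{every} face $G$ meeting $L$, not just for $T$. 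Your route is a legitimate transpose of the paper's and is no harder, but two details deserve tightening. First, rather than arguing that a nonempty open convex set is homeomorphic to $\R^k$ and that $\chi_c$ is a homeomorphism invariant (true, but a heavier fact than needed), note that $\relint G\cap L=\relint(G\cap L)$ whenever the left-hand side is nonempty, so each stratum is itself an ro-polytope of dimension $\dim(\aff G\cap L)$ and Groemer's property $\chi(\relint P)=(-1)^{\dim P}$ applies verbatim. Second, the additivity of $\chi$ over your stratification is exactly Lemma~\ref{lem:chi_property} applied to the pointwise identity $\ind_{F\cap L}=\sum_{G\in\cF_\bullet(F)}\ind_{\relint G\cap L}$, so your argument can be phrased entirely inside the paper's framework without importing the semialgebraic $\chi_c$.
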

In the special case $L=\R^m$ we have $a_k=f_k(T)$,  the number of $k$-dimensional faces of $T$, and the theorem reduces to the classical Euler relation~\eqref{eq:euler_relation}.
\begin{example}
Consider a square $ABCD$ and a line $L$ passing through $A$ and the middle of the side $BC$. Then, $a_0=1$ (vertex $A$), $a_1=3$ (sides $AB$, $AD$, $BC$), and $a_2=1$. We have $1-3+1=-1$. Let $L'$ be the line passing through $A$ and $B$. Then, $a_0'=2$ (vertices $A$ and $B$), $a_1'=3$ (sides $AB$, $AD$, $BC$), and $a_3'=1$. We have $2-3+1=0$.
\end{example}

If for every face $F$ of the polytope $L\cap T$ there is a \textit{unique} face $G$ of the polytope $T$ such that $G\cap L=F$, and if $\dim F = \dim G - d$, then~\eqref{eq:euler_intersection} is a consequence of the Euler--Schl\"afli--Poincar\'e relation~\eqref{eq:euler_relation} for the polytope $L\cap T$. However, it is easy to construct examples in which the uniqueness fails (for example, if $L$ is a line containing some vertex of $T$, $m\geq 2$). Thus, the main problem is how to treat these ``non-general position'' cases.

Our proof of Theorem~\ref{theo:euler_intersection} (which will be given in Section~\ref{sec:proof_euler}) is based on Groemer's extension of the Euler characteristic to the class of ro-polytopes. Our method can be applied to obtain further results of the same type, for example the following one. Recall that $\cF_k(P)$ is the set of $k$-dimensional faces of a polytope  $P$. 

\begin{theorem}\label{theo:euler_touch}
Let $T_1,T_2\subset\R^m$ be two polytopes which touch each other, that is $(\relint T_1)\cap (\relint T_2) = \varnothing$ but $T_1\cap T_2 \neq\varnothing$. Then,
\begin{equation}\label{eq:euler_touch}
\sum_{k=0}^{m} (-1)^k \#\{F\in \cF_{k}(T_1)\colon F\cap T_2\neq \varnothing\} = 0.
\end{equation}
\end{theorem}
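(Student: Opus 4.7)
The plan is to follow the Groemer--Euler method used in the proof of Theorem~\ref{theo:euler_intersection}. Since $F\cap T_2$ is compact and convex for every face $F$ of $T_1$, one has $\ind\{F\cap T_2\neq\varnothing\}=\chi(F\cap T_2)$, where $\chi$ denotes Groemer's additive extension of the Euler characteristic to relatively open polyhedra. So the quantity to evaluate is
\begin{equation*}
A:=\sum_{F\in\cF_\bullet(T_1)}(-1)^{\dim F}\chi(F\cap T_2).
\end{equation*}

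I would decompose each closed face as the disjoint union of the relative interiors of its subfaces, $F=\bigsqcup_{G\leq F}\relint G$, and invoke additivity of $\chi$ to write $\chi(F\cap T_2)=\sum_{G\leq F}\chi((\relint G)\cap T_2)$. Interchanging the order of summation gives
\begin{equation*}
A=\sum_{G\in\cF_\bullet(T_1)}\chi((\relint G)\cap T_2)\cdot S(G),\qquad
S(G):=\sum_{F\in\cF_\bullet(T_1),\,F\geq G}(-1)^{\dim F}.
\end{equation*}
The strict upper set $\{F>G\}$ is in dimension-shifting bijection with the face lattice of a polytope of dimension $\dim T_1-\dim G-1$ (the ``link'' or face figure of $G$ in $T_1$), with $\dim F=\dim G+1+\dim F'$; applying the Euler--Schl\"afli--Poincar\'e relation~\eqref{eq:euler_relation} to this link gives $\sum_{F>G}(-1)^{\dim F}=(-1)^{\dim G+1}$. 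Adding the contribution of $F=G$ itself yields $S(G)=(-1)^{\dim G}+(-1)^{\dim G+1}=0$ for every proper face, while $S(T_1)=(-1)^{\dim T_1}$. Only the term $G=T_1$ survives, so $A=(-1)^{\dim T_1}\chi((\relint T_1)\cap T_2)$.

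It then remains to prove that $(\relint T_1)\cap T_2=\varnothing$. If $x$ were a common point, a small open ball in $\aff T_1$ around $x$ would lie inside $\relint T_1$; since $\relint T_2$ is dense in $T_2$ within $\aff T_2$, one could find points of $\relint T_2$ arbitrarily close to $x$, producing a point in $(\relint T_1)\cap(\relint T_2)$ and contradicting the touching hypothesis. This density step is the main subtlety: it is where the touching assumption is genuinely used, and it requires enough affine compatibility between $T_1$ and $T_2$ (e.g., $\aff T_2\subseteq\aff T_1$, which is automatic whenever $T_1$ is full-dimensional in $\R^m$). Everything preceding it is a purely formal consequence of additivity of $\chi$ and~\eqref{eq:euler_relation}, entirely parallel to the proof of Theorem~\ref{theo:euler_intersection}.
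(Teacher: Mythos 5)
Your combinatorial reduction is correct, and in substance it is the paper's own argument, just arranged dually. The paper starts from the pointwise identity $\ind_{\relint T_1}=\sum_{k}\sum_{F\in\cF_{m-k}(T_1)}(-1)^{k}\ind_{F}$ (which is \eqref{eq:polytope_union_faces} combined with \eqref{eq:euler_generalized}, i.e.\ exactly your claim that $S(G)=0$ for proper faces and $S(T_1)=(-1)^{\dim T_1}$), multiplies it by $\ind_{T_2}$, and converts indicators to Euler characteristics in one stroke via Lemma~\ref{lem:chi_property}; you instead expand each $\chi(F\cap T_2)$ over relatively open subfaces and interchange the two sums, re-deriving \eqref{eq:euler_generalized} via face figures where the paper simply cites it. Both routes reduce the left-hand side of \eqref{eq:euler_touch} to $(-1)^{\dim T_1}\chi\bigl((\relint T_1)\cap T_2\bigr)$, so up to this point your proof and the paper's are equivalent (yours is even slightly more careful when $\dim T_1<m$, where the paper's opening identity holds only up to a harmless factor $(-1)^{m-\dim T_1}$).

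The last step is where the genuine issue sits, and your instinct that it is ``the main subtlety'' is exactly right. The implication from $(\relint T_1)\cap(\relint T_2)=\varnothing$ to $(\relint T_1)\cap T_2=\varnothing$ is false in general, and so is the theorem as literally stated. Take $T_2\subset\R^2$ a square and $T_1=\{p\}$ one of its vertices: the relative interiors are disjoint and $T_1\cap T_2\neq\varnothing$, yet the alternating sum in \eqref{eq:euler_touch} equals $1$; likewise, if $T_1$ is an edge of $T_2$, the sum is $2-1=1$. The paper's proof does not overcome this either: it simply says ``using $(\relint T_1)\cap T_2=\varnothing$'', i.e.\ it reads the touching hypothesis in the stronger, asymmetric form $(\relint T_1)\cap T_2=\varnothing$, $T_1\cap T_2\neq\varnothing$, without deriving it from the stated one (no derivation can exist, by the counterexamples). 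So the correct conclusion is: under that stronger hypothesis, or under your sufficient condition $\aff T_2\subseteq\aff T_1$ (in particular whenever $T_1$ is full-dimensional, which is what makes your density argument work), your proof is complete and matches the paper's; under the hypothesis as printed, the statement itself fails. The gap you flagged is a defect of the theorem's formulation, not of your argument.
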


A somewhat related result is the Euler relation for face-to-face tessellations, see~\cite[Eq.~(14.65)]{schneider_weil_book}, but it seems that this relation implies neither Theorem~\ref{theo:euler_intersection} nor Theorem~\ref{theo:euler_touch}.

\subsection{Inclusion-exclusion principles for intrinsic volumes}
It is possible to generalize Cowan's formula to the setting when we count convex hulls intersecting certain affine subspace $F\subset\R^d$ rather than convex hulls containing a given point $X$. Let $X_1,\ldots,X_n$ be a finite collection of points in  $\R^d$. It will be convenient to assume that the convex hull $\Pi$ of these points has full dimension $d$. This is not a restriction of generality because otherwise we could replace $\R^d$ by the affine hull of $X_1,\ldots,X_n$.   Given an affine subspace $F\subset\R^d$ and $k=1,\ldots,n$, denote by $c_k(F)$ the number of $k$-element subcollections of $(X_1,\ldots,X_n)$ whose convex hull intersects $F$, that is
$$
c_k(F) = \# \{I\subset\{1,\ldots,n\}\colon \# I = k, F\cap \Pi_I \neq \varnothing\}.
$$
\begin{theorem}\label{theo:cowan_affine}
Under the above assumptions,
\begin{equation}\label{eq:cowan_affine}
\sum_{k=1}^{n} (-1)^{k-1} c_k(F)
=
\begin{cases}
(-1)^{d-\dim F}, &\text{if } F\cap \interior  \Pi \neq \varnothing,\\
0, &\text{if } F\cap \interior  \Pi = \varnothing.
\end{cases}
\end{equation}
\end{theorem}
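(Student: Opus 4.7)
My plan is to reduce Theorem~\ref{theo:cowan_affine} to the generalized Euler relation (Theorem~\ref{theo:euler_intersection}) via the same simplex-lifting construction that was sketched after Theorem~\ref{theo:euler_intersection} and that drives the proof of Theorem~\ref{theo:cowan}. First, I would fix a non-degenerate $(n-1)$-simplex $T$ with vertices $v_1, \ldots, v_n$ in $\R^{n-1}$ and let $A : \R^{n-1} \to \R^d$ be the unique affine map with $A(v_i) = X_i$. Because of the standing assumption $\dim \Pi = d$, the map $A$ is an affine \emph{surjection}. For every $I \subset \{1, \ldots, n\}$ of size $k$, the face $\Delta_I := \conv(v_i : i \in I)$ is a $(k-1)$-dimensional face of $T$ and $A(\Delta_I) = \Pi_I$; therefore
$$
\Pi_I \cap F \neq \varnothing \iff \Delta_I \cap L \neq \varnothing, \qquad L := A^{-1}(F).
$$

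Next, since $A$ is an affine surjection from $\R^{n-1}$ onto $\R^d$, the preimage $L$ is an affine subspace of $\R^{n-1}$ of codimension $d - \dim F$. Letting $a_k$ denote the number of $k$-dimensional faces of $T$ that meet $L$ (for $k = 0, \ldots, n-1$), the bijection between subsets $I$ of size $k+1$ and $k$-faces $\Delta_I$ of $T$ gives $a_k = c_{k+1}(F)$. I would then invoke Theorem~\ref{theo:euler_intersection} with $m = n-1$ and with the codimension $d - \dim F$ playing the role of its parameter $d$, obtaining
$$
\sum_{k=1}^{n} (-1)^{k-1} c_k(F) \;=\; \sum_{k=0}^{n-1} (-1)^k a_k \;=\;
\begin{cases}
(-1)^{d - \dim F}, & L \cap \interior T \neq \varnothing,\\
0, & L \cap \interior T = \varnothing.
\end{cases}
$$

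Finally, I would translate the interior dichotomy back to $\R^d$. Since $T$ has non-empty interior and $A$ is an affine surjection, one has the standard identity $A(\interior T) = \interior A(T) = \interior \Pi$. Consequently $L \cap \interior T \neq \varnothing$ iff there exists $y \in \interior T$ with $A(y) \in F$, iff $F \cap \interior \Pi \neq \varnothing$, which matches precisely the two cases in~\eqref{eq:cowan_affine}.

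The whole difficulty of the argument has been outsourced to Theorem~\ref{theo:euler_intersection}: in particular, the delicate situations where $F$ passes through several of the points $X_i$ (which correspond to $L$ passing through several vertices of $T$ in non-general position) are already handled there. The only mildly technical point that remains is verifying $A(\interior T) = \interior \Pi$, which is elementary for affine surjections between finite-dimensional affine spaces. Thus I expect no essential obstacle beyond careful bookkeeping of dimensions and the correct application of Theorem~\ref{theo:euler_intersection}.
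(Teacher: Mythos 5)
Your proposal is correct, but it takes a different route from the paper. The paper deduces Theorem~\ref{theo:cowan_affine} from the already-established point case (Theorem~\ref{theo:cowan}): it translates so that $F$ contains the origin, projects the points orthogonally onto $F^{\bot}$, observes that $F\cap\Pi_I\neq\varnothing$ iff the projected convex hull contains the origin, and applies Cowan's formula in $F^{\bot}\cong\R^{d-\dim F}$. You instead generalize the paper's proof of Theorem~\ref{theo:cowan} itself: you lift to the $(n-1)$-simplex $T$ and apply Theorem~\ref{theo:euler_intersection} directly to $L=A^{-1}(F)$, which has codimension $d-\dim F$ because $A$ restricted to the carrying hyperplane of $T$ is an affine surjection onto $\R^d$. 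All the steps check out: the face--subset correspondence gives $a_{k-1}=c_k(F)$ exactly as in Lemma~\ref{lem:lem}, and the identity $A(\interior T)=\interior\Pi$ is the standard fact that relative interiors of convex sets commute with affine images, which settles the dichotomy. What the paper's route buys is economy -- once Theorem~\ref{theo:cowan} is in hand, the projection argument is a few lines and needs no further appeal to the Euler machinery; what your route buys is uniformity -- the point case becomes the special case $\dim F=0$ of a single argument rather than a stepping stone, and you avoid introducing the orthogonal projection altogether. Either way the real content lives in Theorem~\ref{theo:euler_intersection}, as you note.
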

The above result reduces to Cowan's formula if $F$ is a point. In terms of indicator variables, Theorem~\ref{theo:cowan_affine} can be written as follows:
\begin{equation}\label{eq:cowan_affine_indicator}
\sum_{\varnothing\neq I\subset\{1,\ldots,n\}} (-1)^{\# I-1} \ind_{\{F \cap \Pi_I\neq \varnothing\}} = (-1)^{d-\dim F} \ind_{\{F\cap \interior  \Pi\neq \varnothing\}}.
\end{equation}

The set of all $r$-dimensional affine subspaces of $\R^d$ is denoted by $\text{AffGr}(d,r)$ and called the affine Grassmannian. It is known that $\text{AffGr}(d,r)$ carries a measure $\mu_r$ (defined up to a multiplicative constant) invariant with respect to the natural action of the isometry group of $\R^d$; see~\cite[Chapter~5.1]{schneider_weil_book}.   The intrinsic volumes $V_0(K),\ldots,V_d(K)$ of a compact convex set $K\subset \R^d$ satisfy the Crofton formula~\cite[Theorem~5.1.1]{schneider_weil_book}
\begin{equation}\label{eq:crofton}
V_{r}(K) = 
\frac{\Gamma\left(\frac{d-r+1}{2}\right)\Gamma\left(\frac{r+1}{2}\right)}{\Gamma\left(\frac 12\right) \Gamma\left(\frac{d+1}{2}\right)}
\int_{\text{AffGr}(d,d-r)} \ind_{\{F\cap K\neq \varnothing\}} \mu_{d-r}(\dd F), \; r=0,\ldots,d,
\end{equation}
where we used the same normalization for $\mu_r$ as in~\cite[Chapter~5.1]{schneider_weil_book}.
Integrating~\eqref{eq:cowan_affine_indicator} over $\text{AffGr}(d,d-r)$ with respect to  $\mu_{d-r}(\dd F)$, we obtain an inclusion-exclusion principle for intrinsic volumes which generalizes the result of Cowan~\cite{cowan1} who considered the case $r=d$.
\begin{theorem}\label{theo:cowan_intrinsic}
For any finite collection $X_1,\ldots,X_n$ of points in $\R^d$ with $\dim \Pi=d$ and for every $r=0,\ldots,d$, we have
\begin{equation}\label{eq:cowan_intrinsic}
\sum_{\varnothing\neq I\subset\{1,\ldots,n\}} (-1)^{\# I-1} V_{r}(\Pi_I) = (-1)^{r} V_{r}(\Pi).
\end{equation}
\end{theorem}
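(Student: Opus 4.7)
The plan is to prove Theorem~\ref{theo:cowan_intrinsic} by integrating the indicator-function identity~\eqref{eq:cowan_affine_indicator} against the invariant measure $\mu_{d-r}$ on the affine Grassmannian $\text{AffGr}(d,d-r)$, exactly as foreshadowed in the paragraph preceding the theorem. Specializing~\eqref{eq:cowan_affine_indicator} to $F\in\text{AffGr}(d,d-r)$ turns the right-hand side phase factor into $(-1)^{d-(d-r)}=(-1)^r$, so the identity to integrate reads
\begin{equation*}
\sum_{\varnothing\neq I\subset\{1,\ldots,n\}} (-1)^{\#I-1}\,\ind_{\{F\cap\Pi_I\neq\varnothing\}}
=(-1)^{r}\,\ind_{\{F\cap\interior\Pi\neq\varnothing\}}.
\end{equation*}

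Since there are only finitely many index sets $I$ and each indicator is bounded and $\mu_{d-r}$-integrable (the integral being finite by compactness of $\Pi_I$), Fubini permits exchanging the sum and the integral. Applying the Crofton formula~\eqref{eq:crofton} to each $\Pi_I$ on the left, with the common constant $C_{d,r}=\Gamma(\tfrac{d-r+1}{2})\Gamma(\tfrac{r+1}{2})/(\Gamma(\tfrac12)\Gamma(\tfrac{d+1}{2}))$, yields
\begin{equation*}
\int_{\text{AffGr}(d,d-r)}\!\!\!\sum_{\varnothing\neq I}(-1)^{\#I-1}\ind_{\{F\cap\Pi_I\neq\varnothing\}}\,\mu_{d-r}(\dd F)
=\frac{1}{C_{d,r}}\sum_{\varnothing\neq I}(-1)^{\#I-1}V_r(\Pi_I).
\end{equation*}

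For the right-hand side, the point is that the Crofton integral is insensitive to the distinction between $\Pi$ and $\interior\Pi$. Since we assume $\dim\Pi=d$, the boundary $\partial\Pi$ is a finite union of $(d-1)$-dimensional polytopal facets lying in finitely many affine hyperplanes $H_1,\ldots,H_N$. The set of $F\in\text{AffGr}(d,d-r)$ with $F\cap\Pi\neq\varnothing$ but $F\cap\interior\Pi=\varnothing$ is contained in $\bigcup_j\{F:F\subset H_j\}\cup\bigcup_j\{F:F\cap H_j\neq\varnothing,\ F\not\subset H_j,\ F\cap\interior\Pi=\varnothing\}$; a standard transversality argument (or the vanishing of $V_r$ on lower-dimensional sets paired with Crofton's formula for $H_j\cap\Pi$) shows this set has $\mu_{d-r}$-measure zero. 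Consequently, a second application of~\eqref{eq:crofton} gives
\begin{equation*}
(-1)^r\int_{\text{AffGr}(d,d-r)}\ind_{\{F\cap\interior\Pi\neq\varnothing\}}\,\mu_{d-r}(\dd F)
=\frac{(-1)^r}{C_{d,r}}V_r(\Pi).
\end{equation*}

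Equating the two sides and cancelling $C_{d,r}$ produces~\eqref{eq:cowan_intrinsic}. The only nontrivial step is the null-set argument in the last paragraph; this is routine but deserves to be written out carefully, and is the one place where the full-dimensionality assumption $\dim\Pi=d$ is genuinely used. Every other ingredient (Fubini, the Crofton formula, Theorem~\ref{theo:cowan_affine}) is invoked off the shelf.
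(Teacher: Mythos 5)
Your proposal is correct and follows exactly the route the paper takes: its entire proof of Theorem~\ref{theo:cowan_intrinsic} is the sentence preceding the statement, namely integrating~\eqref{eq:cowan_affine_indicator} over $\text{AffGr}(d,d-r)$ against $\mu_{d-r}$ and invoking the Crofton formula~\eqref{eq:crofton}. The only point you elaborate beyond the paper is the (correct, and worth noting) observation that the flats meeting $\Pi$ but not $\interior\Pi$ form a $\mu_{d-r}$-null set, which the paper leaves implicit.
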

\begin{remark}
Using the local Crofton formula~\cite[Theorem~5.3.3]{schneider_weil_book} one can prove a similar identity for the curvature measures of the $\Pi_I$'s.
\end{remark}

As in the work of Cowan~\cite{cowan1}, it is possible to obtain probabilistic corollaries of the above deterministic results.
Let  $X_1,\ldots,X_n$ be random vectors with values in $\R^d$. We call $(X_1,\ldots,X_n)$ an \textit{exchangeable tuple} if for every permutation $\sigma$ of $\{1,\ldots,n\}$, the distributional equality
\begin{equation}\label{eq:exchange}
(X_1,\ldots,X_n) \eqdistr (X_{\sigma(1)},\ldots, X_{\sigma(n)})
\end{equation}
holds. For example, this condition is satisfied if $X_1,\ldots,X_n$ are independent identically distributed random vectors; see~\cite{cowan1} for more examples.  Given any subset  $I\subset\{1,\ldots,n\}$ with $\#I = k$  write
$$
v_{r}(k)
:=
\E V_r(\Pi_I)
=
\E V_r(\conv(X_1,\ldots,X_k)),
$$
where we stress that  by exchangeability,  there is no dependence on the choice of $I$.

\begin{theorem}\label{theo:cowan_intrinsic_random}
Let $(X_1,\ldots,X_n)$ be an exchangeable tuple of random vectors in $\R^d$ such that $\dim \Pi=d$ a.s. Let $r\in \{0,\ldots,d\}$ and assume that $\E V_r( \Pi)<\infty$. Then,
$$
\sum_{k=1}^{n} (-1)^{k-1} \binom{n}{k} v_{r}(k) = (-1)^{r} v_{r}(n).
$$
\end{theorem}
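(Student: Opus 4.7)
The plan is to deduce Theorem~\ref{theo:cowan_intrinsic_random} from the deterministic identity~\eqref{eq:cowan_intrinsic} by simply taking expectations on both sides and exploiting exchangeability to collapse the sum over subsets into a sum over subset sizes.

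First, since $\dim \Pi = d$ almost surely, Theorem~\ref{theo:cowan_intrinsic} applies to the random configuration $(X_1,\ldots,X_n)$ on an event of full probability, so the identity
$$
\sum_{\varnothing\neq I\subset\{1,\ldots,n\}} (-1)^{\#I-1} V_r(\Pi_I) = (-1)^r V_r(\Pi)
$$
holds almost surely. The sum on the left is a finite deterministic combination of random variables, so once I have integrability of each term I can apply linearity of expectation.

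To justify integrability, I will invoke the standard monotonicity of intrinsic volumes on convex bodies: whenever $K\subset K'$ are compact convex sets one has $V_r(K)\le V_r(K')$ (see~\cite[Chapter~4]{schneider_book}). Since $\Pi_I\subset\Pi$ for every $I$, this yields $V_r(\Pi_I)\le V_r(\Pi)$ and hence $\E V_r(\Pi_I)\le \E V_r(\Pi)<\infty$ by hypothesis. Therefore expectation passes through the finite sum, giving
$$
\sum_{\varnothing\neq I\subset\{1,\ldots,n\}} (-1)^{\#I-1} \E V_r(\Pi_I) = (-1)^r \E V_r(\Pi).
$$

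Finally, by the exchangeability assumption~\eqref{eq:exchange}, the random polytope $\Pi_I = \conv(X_i: i\in I)$ has a distribution depending only on $\#I$; in particular $\E V_r(\Pi_I) = v_r(\#I)$. Grouping the sum by $k=\#I$ (there are $\binom{n}{k}$ subsets of each size $k\in\{1,\ldots,n\}$) and noting that $\E V_r(\Pi) = v_r(n)$, the left-hand side rewrites as $\sum_{k=1}^n (-1)^{k-1}\binom{n}{k} v_r(k)$, yielding the claim. There is essentially no obstacle here: the deterministic Theorem~\ref{theo:cowan_intrinsic} does all the geometric work, and the only subtlety is the integrability check, which is handled by monotonicity.
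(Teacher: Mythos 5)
Your proposal is correct and matches the paper's own proof: both deduce the identity by taking expectations in the deterministic formula~\eqref{eq:cowan_intrinsic}, using monotonicity of intrinsic volumes for integrability and exchangeability to collapse the sum over subsets of size $k$ into a factor $\binom{n}{k}v_r(k)$. There is nothing to add or correct.
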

\begin{remark}
Condition $\E V_r(\Pi)<\infty$ holds provided that  $\E |X_1|^r<\infty$.  This follows
from the fact that
$\Pi$ is contained in the ball of radius $|X_1|+\ldots+|X_n|$ centred at the origin together with
the monotonicity (see~\eqref{eq:crofton}) and the homogeneity of the intrinsic volumes.
\end{remark}
\begin{proof}[Proof of Theorem~\ref{theo:cowan_intrinsic_random}]
By the monotonicity of the intrinsic volumes, our assumption
 $\E V_r(\Pi)<\infty$ implies that $v_{r}(k) <\infty$ for all $k\in \{1,\ldots,n\}$. Taking the expectation in~\eqref{eq:cowan_intrinsic} and noting that there are $\binom nk$ subsets $I$ with $k$ elements, we obtain the result.
\end{proof}
In the case $r=d$, Theorem~\ref{theo:cowan_intrinsic_random} reduces to the well-known identity of Buchta~\cite{buchta}; see also~\cite[Theorem~8.2.6]{schneider_weil_book} and~\cite{affentranger1,affentranger2,badertscher,cowan1,beermann_reitzner,buchta_miles}.

\subsection{Inclusion-exclusion principles for faces}
Next we are going to state inclusion-exclusion principles for the faces of the polytopes $\Pi_I$, $I\subset \{1,\ldots,n\}$. These deterministic formulas will be used to prove probabilistic identities (conjectured by Cowan in~\cite{cowan2}) on the expected face numbers of random convex hulls.

Fix some finite collection of points $X_1,\ldots,X_n$ in $\R^d$.
For a set $I\subset\{1,\ldots,n\}$
denote by $b_j(I)$ the number of $j$-element sets $J\supset  I$ such that $\Pi_I$ is a face of $\Pi_J$, that is
$$
b_j(I) = \#\{J\subset\{1,\ldots,n\}\colon J\supset  I, \#J=j, \Pi_I \text{ is a face of } \Pi_J\}.
$$
\begin{theorem}\label{theo:inclusion_exclusion_faces}
Consider a non-empty subset $I\subset\{1,\ldots,n\}$ such that\footnote{We write $(X_k \colon k\notin I)$ rather than $\{X_k \colon k\notin I\}$ because the points need not be distinct.} $\Pi_I\cap (X_k \colon k\notin I)=\varnothing$. Then,
\begin{equation}\label{eq:cowan_faces}
\sum_{j=\#I}^n (-1)^{j-1} b_j(I)=
\begin{cases}
(-1)^{\dim \Pi + \#I - 1 - \dim \Pi_I}, &\text{if } \Pi_I \text{ is not a face of } \Pi,\\
0, &\text{if } \Pi_I \text{ is a face of } \Pi.
\end{cases}
\end{equation}
\end{theorem}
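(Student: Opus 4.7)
The plan is to reduce Theorem~\ref{theo:inclusion_exclusion_faces} to the generalized Euler relation (Theorem~\ref{theo:euler_intersection}) by passing to the quotient of $\R^d$ by the linear subspace parallel to $\aff(\Pi_I)$. First I would fix $i_0 \in I$, set $V := \Span(\aff(\Pi_I) - X_{i_0})$, and let $\pi\colon \R^d \to W := \R^d/V$ be the quotient projection. Then $\pi$ collapses $\Pi_I$ to a single point $p := \pi(X_{i_0})$, and the hypothesis $\Pi_I\cap(X_k\colon k\notin I)=\varnothing$ guarantees that $Y_k := \pi(X_k) - p$ is nonzero for every $k\notin I$. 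The key geometric lemma to prove is that for every $K \subset \{1,\ldots,n\}\setminus I$,
\[
\Pi_I \text{ is a face of } \Pi_{I\cup K} \iff 0\notin \conv(Y_k\colon k\in K),
\]
with the convention $\conv(\varnothing):=\varnothing$. This follows from the supporting-hyperplane characterization of faces: a linear functional on $\R^d$ constant on $\aff(\Pi_I)$ and strictly smaller on the $X_k$ ($k\in K$) descends through $\pi$ to a linear functional on $W$ strictly separating $0$ from $\{Y_k\}_{k\in K}$, and conversely.

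Next, let $m:=n-\#I$, relabel the $Y_k$'s as $Y_1,\ldots,Y_m$, and fix a non-degenerate simplex $T'$ with vertices $v_1,\ldots,v_m$ in $\R^{m-1}$. Define the affine map $B\colon \R^{m-1}\to W$ by $B(v_k)=Y_k$, so that each face $F_K:=\conv(v_k\colon k\in K)$ of $T'$ satisfies $B(F_K)=\conv(Y_k\colon k\in K)$. The event $\{0\in\conv(Y_k\colon k\in K)\}$ then becomes $\{F_K \cap L \neq \varnothing\}$ for the affine subspace $L := B^{-1}(0) \subset \R^{m-1}$. Applying Theorem~\ref{theo:euler_intersection} to the pair $(T',L)$ evaluates
\[
\sum_{\varnothing\neq K\subset\{1,\ldots,m\}}(-1)^{\#K-1}\ind\{0\in\conv(Y_k\colon k\in K)\} = (-1)^{\dim\tilde\Pi}\,\ind\{0\in\relint\tilde\Pi\},
\]
where $\tilde\Pi:=\conv(Y_1,\ldots,Y_m)$: the subspace $L$ meets $\interior T'$ precisely when $0$ admits a strictly positive convex representation in the $Y_k$ (i.e.\ $0\in\relint\tilde\Pi$), and in that case the codimension of $L$ in $\R^{m-1}$ equals $\dim\tilde\Pi$.

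Finally I would assemble the pieces. Using the characterization from the first step, the left-hand side of~\eqref{eq:cowan_faces} equals $(-1)^{\#I-1}\sum_{K\subset\{1,\ldots,m\}}(-1)^{\#K}\ind\{0\notin\conv(Y_k\colon k\in K)\}$; splitting off $K=\varnothing$, exploiting $\sum_K(-1)^{\#K}=0$ for $m\geq 1$, and complementing indicators convert this into the Euler sum above up to an overall sign, yielding $(-1)^{\#I-1+\dim\tilde\Pi}\ind\{0\in\relint\tilde\Pi\}$. The identity $\dim\tilde\Pi=\dim\Pi-\dim\Pi_I$ in the non-vanishing case follows from the dimension formula $\dim\pi(\Pi)=\dim\Pi-\dim\Pi_I$ together with $\pi(\Pi)=p+\tilde\Pi$ whenever $0\in\tilde\Pi$. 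The main obstacle will be the equivalence ``$0\in\relint\tilde\Pi\iff\Pi_I$ is not a face of $\Pi$'': under the stated hypothesis one must rule out the delicate case in which $0$ lies only on the relative boundary of $\tilde\Pi$, and it is here that the hypothesis on $\Pi_I$ must do its essential geometric work.
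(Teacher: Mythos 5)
Your reduction is essentially the paper's own proof transplanted from an orthogonal projection into a quotient map: your key lemma is the paper's Lemma~\ref{lem:clean_face} (under the standing hypothesis ``face'' and ``clean face'' coincide) read through $\pi$; your Euler-sum evaluation is exactly Theorem~\ref{theo:cowan} applied to the points $Y_1,\ldots,Y_m$ and the point $0$, so there is no need to redo the simplex construction and Theorem~\ref{theo:euler_intersection} --- the paper does that once and for all when proving Theorem~\ref{theo:cowan}; and your final assembly coincides with the computation of the quantity $N$ in the paper's proof of Theorem~\ref{theo:inclusion_exclusion_faces_clean}. Up to and including the codimension count and the identity $\dim\tilde\Pi=\dim\Pi-\dim\Pi_I$, everything you write is correct (modulo the trivial degenerate case $L=B^{-1}(0)=\varnothing$, where both sides vanish).

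The gap is exactly the step you flag at the end, and it is not a delicacy that more work will fix: the equivalence ``$0\in\relint\tilde\Pi$ iff $\Pi_I$ is not a face of $\Pi$'' is false under the stated hypothesis. Indeed, your own key lemma with $K=\{1,\ldots,n\}\setminus I$ says that ``$\Pi_I$ is not a face of $\Pi$'' is equivalent to $0\in\tilde\Pi$, so what you need is the implication $0\in\tilde\Pi\Rightarrow 0\in\relint\tilde\Pi$, and the hypothesis $\Pi_I\cap(X_k\colon k\notin I)=\varnothing$ does not exclude $0$ lying on the relative boundary of $\tilde\Pi$. Concretely, take $d=2$, $n=4$, $X_1=(0,0)$, $X_2=(-1,0)$, $X_3=(1,0)$, $X_4=(0,1)$, $I=\{1\}$: the hypothesis holds, $\tilde\Pi=\conv((-1,0),(1,0),(0,1))$ contains $0$ only on its boundary edge, and $\Pi_I=\{(0,0)\}$ is not a face of $\Pi$ (it is the midpoint of an edge of the triangle $\Pi$). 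Here $b_1(I)=1$, $b_2(I)=3$, $b_3(I)=2$ (namely $J=\{1,2,4\}$ and $J=\{1,3,4\}$), $b_4(I)=0$, so the left-hand side of \eqref{eq:cowan_faces} equals $1-3+2-0=0$ (it would be $-1$ under the convention that $J=I$ is not counted, but the paper's Example~\ref{ex:example} shows $J=I$ is counted), whereas the right-hand side of \eqref{eq:cowan_faces} is $(-1)^{2+1-1-0}=1$. Thus what your argument proves --- and all it can prove --- is \eqref{eq:cowan_faces} with the dichotomy ``$\aff(\Pi_I)\cap\interior\Pi\neq\varnothing$'' (equivalently $0\in\relint\tilde\Pi$) in place of ``$\Pi_I$ is not a face of $\Pi$''; the two conditions differ precisely in tangential configurations like the one above. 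Notably, the paper's own proof has the same defect: it ends with the unproved assertion that $L\cap\interior\Pi=\varnothing$ if and only if $\Pi_I$ is a face of $\Pi$ (with $L=\aff(\Pi_I)$), of which only the ``if'' direction is true, and the same four-point configuration defeats the printed statement. So the obstacle you identified is a genuine gap, shared by the paper, and closing it requires changing the dichotomy in the statement rather than sharpening the argument.
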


The above assumption on $I$ is always satisfied if $\#I=1$ (that is, if $\Pi_I$ is a single point) and the points $X_1,\ldots,X_n$ are distinct. The following example shows that~\eqref{eq:cowan_faces} may fail in general.

\begin{example}\label{ex:example}
Consider the points $X_i=i$, $i=1,\ldots,5$, on the real line and take $I=\{2,4\}$. Clearly, $\Pi_I= [2,4]$ is a face of $\Pi_J$ if and only if $J=\{2,4\}$ or  $J=\{2,3,4\}$. Thus, $b_1(I)=b_4(I)=b_5(I)=0$ and $b_2(I)=b_3(I)=1$. The alternating sum in~\eqref{eq:cowan_faces} equals $0$, so that~\eqref{eq:cowan_faces} fails.
\end{example}

In order to state a version of Theorem~\ref{theo:inclusion_exclusion_faces} not requiring additional assumptions on $I$, we have to introduce the following modified version of $b_j(I)$:
$$
b_j^*(I) = \#\{J\subset\{1,\ldots,n\}\colon J\supset  I, \#J=j, \Pi_I \text{ is a clean face of } \Pi_J\},
$$
where the word ``clean'' means that $\Pi_I\cap (X_k\colon k\in J\backslash I)=\varnothing$.

\begin{theorem}\label{theo:inclusion_exclusion_faces_clean}
For every non-empty subset $I\subset\{1,\ldots,n\}$ we have
\begin{equation}\label{eq:cowan_faces_clean}
\sum_{j=\#I}^n (-1)^{j-1} b_j^*(I)=
\begin{cases}
(-1)^{\dim \Pi + \#I - 1 - \dim \Pi_I}, &\text{if } \Pi_I \text{ is not a face of } \Pi,\\
0, &\text{if } \Pi_I \text{ is a face of } \Pi.
\end{cases}
\end{equation}
\end{theorem}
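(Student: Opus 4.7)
The plan is to reduce Theorem~\ref{theo:inclusion_exclusion_faces_clean} to Theorem~\ref{theo:inclusion_exclusion_faces} by restricting to a sub-collection of $(X_1,\ldots,X_n)$ on which the cleanness hypothesis of the latter theorem holds automatically, while preserving all the global quantities appearing on the right-hand side of~\eqref{eq:cowan_faces_clean}.

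For the given $I$, I would introduce the set of \emph{redundant} indices
$$
\KKK := \{k\in\{1,\ldots,n\}\setminus I\colon X_k\in \Pi_I\}
$$
and set $N := \{1,\ldots,n\}\setminus \KKK$, so that $I\subset N$ and $\Pi_I\cap (X_k\colon k\in N\setminus I)=\varnothing$ by construction. The sub-collection $(X_i)_{i\in N}$ together with the subset $I$ therefore satisfies the hypothesis of Theorem~\ref{theo:inclusion_exclusion_faces}. The key observation is that removing the indices in $\KKK$ does not change any of the global data: each discarded vertex $X_k$, $k\in \KKK$, already lies in $\Pi_I\subset\conv(X_i\colon i\in N)$, hence $\conv(X_i\colon i\in N)=\Pi$; in particular $\dim \conv(X_i\colon i\in N)=\dim \Pi$ and ``$\Pi_I$ is a face of $\conv(X_i\colon i\in N)$'' is equivalent to ``$\Pi_I$ is a face of $\Pi$''. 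Consequently, the right-hand side of Theorem~\ref{theo:inclusion_exclusion_faces} applied to the sub-collection matches the right-hand side of~\eqref{eq:cowan_faces_clean} verbatim.

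To complete the reduction, I would identify the two left-hand sides. For every $J\supset I$, the cleanness condition $\Pi_I\cap (X_k\colon k\in J\setminus I)=\varnothing$ is equivalent to $(J\setminus I)\cap \KKK=\varnothing$, that is, to $J\subset N$; conversely, any such $J\subset N$ is automatically clean. Therefore $b_j^*(I)$ coincides with the analogue of $b_j(I)$ for the sub-problem indexed by $N$, and it vanishes for $j>|N|$, so truncating the summation range from $n$ down to $|N|$ is harmless. Applying Theorem~\ref{theo:inclusion_exclusion_faces} to this sub-collection then yields~\eqref{eq:cowan_faces_clean}.

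The main (indeed only) obstacle is the bookkeeping in the previous paragraph: one has to check that passing from $\{1,\ldots,n\}$ to $N$ simultaneously preserves $\dim\Pi$, the face-relation ``$\Pi_I$ is a face of $\Pi$'', and identifies cleanness with containment in $N$. All three follow at once from the elementary fact that a point already lying in a sub-polytope is redundant for the ambient convex hull. Example~\ref{ex:example} illustrates the mechanism: there $X_3\in \Pi_I=[2,4]$ forces $\KKK=\{3\}$, and after restricting to $N=\{1,2,4,5\}$ the offending term $J=\{2,3,4\}$ is discarded and the correct alternating sum $-1$ is recovered.
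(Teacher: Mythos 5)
Your reduction itself is carried out correctly: with $\KKK=\{k\notin I\colon X_k\in\Pi_I\}$ and $N=\{1,\ldots,n\}\setminus\KKK$ one indeed has $\conv(X_i\colon i\in N)=\Pi$ (each discarded point lies in $\Pi_I\subset\conv(X_i\colon i\in N)$, so dropping it does not change the hull), the hypothesis of Theorem~\ref{theo:inclusion_exclusion_faces} holds for the sub-collection, and $b_j^*(I)$ for the full collection equals $b_j(I)$ for the sub-collection because a set $J\supset I$ is clean precisely when $J\subset N$. So Theorem~\ref{theo:inclusion_exclusion_faces} does imply Theorem~\ref{theo:inclusion_exclusion_faces_clean}, and your check against Example~\ref{ex:example} is accurate.

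The gap is that this is circular relative to the paper: Theorem~\ref{theo:inclusion_exclusion_faces} has no independent proof there. It is obtained only as the special case of Theorem~\ref{theo:inclusion_exclusion_faces_clean} in which the hypothesis $\Pi_I\cap(X_k\colon k\notin I)=\varnothing$ forces $b_j(I)=b_j^*(I)$ for all $j$. Your argument therefore establishes that the two statements are equivalent, but supplies a proof of neither. The substantive content of the paper's proof is entirely absent from your proposal, namely: (1) Lemma~\ref{lem:clean_face}, asserting that $\Pi_I$ is a clean face of $\Pi_J$ if and only if $\aff(X_i\colon i\in I)\cap\conv(X_k\colon k\in J\setminus I)=\varnothing$; and (2) the application of Theorem~\ref{theo:cowan_affine} to the affine subspace $L=\aff(X_i\colon i\in I)$, followed by a resummation over the sets $K=J\setminus I$ to convert the alternating sum over $K$ into the alternating sum of the $b_j^*(I)$. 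To turn your proposal into a proof you would have to prove Theorem~\ref{theo:inclusion_exclusion_faces} from scratch under its cleanness hypothesis, which requires essentially the same two ingredients and the same amount of work as proving Theorem~\ref{theo:inclusion_exclusion_faces_clean} directly.
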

Under the assumption $\Pi_I\cap (X_k\colon k\notin I)=\varnothing$ the conditions ``$\Pi_I$ is a face of $\Pi_J$'' and ``$\Pi_I$ is a clean face of $\Pi_J$'' become equivalent which means that  Theorem~\ref{theo:inclusion_exclusion_faces_clean} contains Theorem~\ref{theo:inclusion_exclusion_faces} as a special case.
\begin{example}
Continuing Example~\ref{ex:example}, we see that $\Pi_I$ is a clean face of $\Pi_J$ if and only if $J=I=\{2,4\}$. Hence, all $b_j^*(I)$ equal zero except for $b_2^*(I)=1$, and~\eqref{eq:cowan_faces_clean} holds.
\end{example}


We say that a finite collection $X_1,\ldots,X_n$ of points in $\R^d$ is in \textit{$r$-general position} for some $r\in \{1,\ldots,d\}$ if every $r$-dimensional affine subspace contains at most $r+1$ points from this set.
Recall that $f_r(P)=\#\cF_r(P)$ denotes the number of $r$-dimensional faces of a polytope $P$. The next result is a deterministic counterpart of a probabilistic formula conjectured by Cowan~\cite{cowan2}. We obtain it by summing up~\eqref{eq:cowan_faces} over all subsets $I$ with $\# I = r+1$; see Section~\ref{subsec:proof_theo_relation_for_f_r} for details.
\begin{theorem}\label{theo:relation_for_f_r}
Fix some $d\in\N$ and  $r\in \{1,\ldots,d\}$. Let $X_1,\ldots,X_n$ be a finite collection of points in $r$-general position in $\R^d$  and suppose that $\dim \Pi=d$. Then,
\begin{equation}\label{eq:relation_for_f_r}
\sum_{\varnothing\neq J\subset\{1,\ldots,n\}} (-1)^{\#J-1} f_r(\Pi_J) = (-1)^d \left(\binom {n}{r+1} - f_r(\Pi)\right).
\end{equation}
\end{theorem}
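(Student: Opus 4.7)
The plan is to apply Theorem~\ref{theo:inclusion_exclusion_faces} to every $(r+1)$-element subset $I\subset\{1,\ldots,n\}$ and sum the resulting identities. First I would verify that the hypothesis of Theorem~\ref{theo:inclusion_exclusion_faces} is automatic under $r$-general position for such $I$. Indeed, $r$-general position implies that the $r+1$ points $(X_i\colon i\in I)$ are affinely independent (they span an $r$-dimensional affine subspace which, by assumption, contains no further $X_k$), so $\Pi_I$ is an $r$-simplex with $\dim\Pi_I=r$, and $\Pi_I\cap(X_k\colon k\notin I)=\varnothing$ since any such $X_k$ would give an $(r+2)$-nd point in the affine hull of $\Pi_I$. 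With $\dim\Pi=d$, $\#I=r+1$, and $\dim\Pi_I=r$, the exponent $\dim\Pi+\#I-1-\dim\Pi_I$ on the right-hand side of \eqref{eq:cowan_faces} simplifies to $d$, so Theorem~\ref{theo:inclusion_exclusion_faces} yields
\begin{equation*}
\sum_{j=r+1}^n(-1)^{j-1}b_j(I)=
\begin{cases}(-1)^d,&\Pi_I\text{ is not a face of }\Pi,\\ 0,&\Pi_I\text{ is a face of }\Pi.\end{cases}
\end{equation*}

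Next I would sum this identity over all $I\subset\{1,\ldots,n\}$ with $\#I=r+1$ and interchange the order of summation on the left. The key bookkeeping step is the identity
\begin{equation*}
\sum_{\#I=r+1}b_j(I)=\sum_{\#J=j}f_r(\Pi_J),
\end{equation*}
which I would prove by double counting pairs $(I,J)$ with $I\subset J$, $\#I=r+1$, $\#J=j$, and $\Pi_I$ a face of $\Pi_J$. The nontrivial direction is that, under $r$-general position, every $r$-dimensional face $F$ of $\Pi_J$ is an $r$-simplex with exactly $r+1$ vertices: the affine hull of $F$ is an $r$-dimensional affine subspace, and hence contains at most $r+1$ of the $X_i$'s, forcing $F=\Pi_I$ for the unique $(r+1)$-subset $I\subset J$ consisting of its vertices. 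Together with the reverse correspondence (which is immediate), this establishes the double-counting identity. Since $f_r(\Pi_J)=0$ whenever $\#J\leq r$, the summed left-hand side becomes precisely $\sum_{\varnothing\neq J}(-1)^{\#J-1}f_r(\Pi_J)$.

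For the right-hand side I would observe, by the same $r$-general-position argument, that the map $I\mapsto\Pi_I$ is a bijection between $\{I\colon\#I=r+1,\ \Pi_I\text{ is a face of }\Pi\}$ and $\cF_r(\Pi)$, so the number of $(r+1)$-subsets with $\Pi_I$ \emph{not} a face of $\Pi$ is $\binom{n}{r+1}-f_r(\Pi)$. Multiplying by $(-1)^d$ and equating with the left-hand side gives \eqref{eq:relation_for_f_r}. The main (and really only) obstacle is the combinatorial accounting in the double-counting step; once one verifies that $r$-general position forces each $r$-face of every $\Pi_J$ to be a simplex spanned by a unique $(r+1)$-subset of $J$, the rest is formal.
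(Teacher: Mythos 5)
Your proposal is correct and follows essentially the same route as the paper's own proof: apply Theorem~\ref{theo:inclusion_exclusion_faces} to each $(r+1)$-subset $I$ (the exponent collapsing to $d$), sum over $I$, and use that $r$-general position makes every $r$-face of every $\Pi_J$ equal to $\Pi_I$ for a unique $(r+1)$-subset $I\subset J$, which is exactly the interchange of summation the paper performs. Your additional verifications (that the hypothesis $\Pi_I\cap(X_k\colon k\notin I)=\varnothing$ of Theorem~\ref{theo:inclusion_exclusion_faces} is automatic, and the explicit double-counting identity) are details the paper leaves implicit, so nothing further is needed.
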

In particular, if $n-d$ is odd, then the term $f_r(\Pi)$ appears on both sides with different signs, and we obtain the relation
$$
2 f_r(\Pi) = \binom{n}{r+1} + (-1)^n \sum_{\varnothing \neq J\subsetneq \{1,\ldots,n\}} (-1)^{\#J-1} f_r(\Pi_J),
$$
where we stress that the term with $J=\{1,\ldots,n\}$ is excluded from the summation. If $n-d$ is even, then the term $f_r(\Pi)$ cancels and we obtain
$$
\sum_{\varnothing \neq J\subsetneq \{1,\ldots,n\}} (-1)^{\#J-1} f_r(\Pi_J) = (-1)^d \binom{n}{r+1}.
$$
Passing to the random setting, we prove a formula which was conjectured by Cowan~\cite{cowan2} and proved by him in some special cases using the Dehn--Sommerville relations. Let $(X_1,\ldots,X_n)$ be an exchangeable tuple of random vectors in $\R^d$; see~\eqref{eq:exchange}. For an arbitrary subset $I\subset\{1,\ldots,n\}$  with $\# I=k$ we write
$$
F_r(k) := \E f_r(\Pi_I)
=
\E f_r(\conv(X_1,\ldots,X_k))
$$
for the expected number of $r$-dimensional faces of $\Pi_I$.
\begin{theorem}\label{theo:relation_for_f_r_random}
Fix some $d\in\N$ and $r\in \{1,\ldots,d\}$. Let $(X_1,\ldots,X_n)$ be an exchangeable tuple of random vectors in $\R^d$ such that with probability one, $\dim \Pi=d$ and the points $X_1,\ldots,X_n$ are in $r$-general position. Then,
$$
\sum_{k=1}^n (-1)^{k-1}  \binom nk F_r(k) = (-1)^d \left(\binom {n}{r+1} - F_r(n)\right).
$$
\end{theorem}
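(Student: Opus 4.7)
The plan is to apply Theorem~\ref{theo:relation_for_f_r} pathwise and then take expectations, in direct analogy with the proof of Theorem~\ref{theo:cowan_intrinsic_random} already given. First I would note that, by hypothesis, there is an event of probability one on which $\dim \Pi = d$ and $X_1,\ldots,X_n$ are in $r$-general position; on this event the deterministic identity
$$
\sum_{\varnothing \neq J\subset\{1,\ldots,n\}} (-1)^{\#J-1} f_r(\Pi_J) = (-1)^d\left(\binom{n}{r+1} - f_r(\Pi)\right)
$$
holds as a consequence of Theorem~\ref{theo:relation_for_f_r}. The goal is then to integrate both sides and rearrange.

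The only item to verify is that the expectations involved are finite, so that the exchange of sum and expectation is unproblematic. This is automatic from a crude deterministic bound: any $r$-dimensional face of $\Pi_J$ is the convex hull of some subcollection of $(X_i\colon i\in J)$ of cardinality at least $r+1$, and distinct faces arise from distinct vertex subcollections, whence $f_r(\Pi_J)\leq 2^{\#J}\leq 2^n$. Consequently $F_r(k)<\infty$ for every $k\in\{1,\ldots,n\}$ without any moment assumption on the $X_i$.

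Finally, exchangeability of $(X_1,\ldots,X_n)$ gives $\E f_r(\Pi_J)=F_r(\#J)$ for every non-empty $J\subset\{1,\ldots,n\}$, independently of which $J$ of that size is chosen. Grouping terms by $k=\#J$ on the left-hand side and using that $\binom{n}{r+1}$ is deterministic together with $\E f_r(\Pi)=F_r(n)$ on the right, taking expectations yields exactly
$$
\sum_{k=1}^n (-1)^{k-1}\binom{n}{k} F_r(k) = (-1)^d\left(\binom{n}{r+1} - F_r(n)\right),
$$
which is the claim. I do not expect a genuine obstacle here: the entire content of the theorem is transported from the deterministic version by linearity of expectation, once the trivial integrability bound above is in place.
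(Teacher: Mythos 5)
Your proposal is correct and coincides with the paper's own argument: apply the deterministic identity of Theorem~\ref{theo:relation_for_f_r} on the almost sure event, take expectations, and group the subsets $J$ by cardinality using exchangeability. The explicit integrability remark (the crude bound $f_r(\Pi_J)\leq 2^n$) is a harmless addition that the paper leaves implicit.
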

\begin{proof}
Take the expectation in~\eqref{eq:relation_for_f_r} and note that there are $\binom nk$ subsets $J$ with $k$ elements.
\end{proof}


\subsection{A proof of Buchta's identity}
Several remarkable identities for random convex hulls were discovered by Buchta in~\cite{buchta_identity}. One of these identities has a form very similar to the inclusion-exclusion principles studied in the present paper. Given that Theorems~\ref{theo:cowan_intrinsic_random} and~\ref{theo:relation_for_f_r_random} have deterministic counterparts, it is natural to ask whether something similar is true for Buchta's identity.

To state Buchta's identity, let $X_1,\ldots,X_n$ be independent identically distributed random vectors in $\R^d$. The  probability distribution of $X_i$ is denoted by $\mu$ and assumed to be non-atomic (which implies that $X_1,\ldots,X_n$ are distinct a.s.).
Denote by $N_n=f_0(\Pi)$ the number of vertices of $\Pi$ and write
$$
M_j := \mu (\conv(X_1,\ldots,X_j))
$$
for the so-called probability content of $\conv(X_1,\ldots,X_j)$.  Buchta's identity~\cite{buchta_identity} (see also~\cite[Theorem 8.2.5]{schneider_weil_book}) states that for every $l=1,\ldots,n$,
\begin{equation}\label{eq:buchta}
\P[N_n = l] = (-1)^l \binom n l \sum_{j=1}^l (-1)^j \binom lj \E M_j^{n-j}.
\end{equation}
If $\mu$ is the uniform distribution on some convex body $K\subset \R^d$, then $M_j$ is just the volume of $\conv(X_1,\ldots,X_j)$ divided by the volume of $K$. Below we provide a ``pointwise'' version of~\eqref{eq:buchta} which turns out to be a very simple  inclusion-exclusion formula. Our proof is different from the original proof of Buchta~\cite{buchta} (see also~\cite[Theorem 8.2.5]{schneider_weil_book}).

\vspace*{2mm}
\noindent
\textit{Proof of~\eqref{eq:buchta}.}
Denote the probability space on which $X_1,\ldots,X_n$ are defined by $(\Omega,\mathbb F, \P)$. Let $A_1,\ldots,A_n\in \mathbb F$ be  random events to be specified later and denote by $A_i^c=\Omega\backslash A_i$ the complement of $A_i$.   Start with the inclusion-exclusion principle
\begin{equation} \label{eq:buchta_incl_excl}
\ind_{A_1\cap\ldots\cap A_l\cap A_{l+1}^c \cap \ldots \cap A_n^c}
=
\prod_{i=1}^l (1-\ind_{A_i^c})
\prod_{k=l+1}^n \ind_{A_k^c}
=
\sum_{J\subset\{1,\ldots,l\}} (-1)^{l-\#J} \ind_{\cap_{i\notin J} A_i^c},
\end{equation}
where $J=\varnothing$ is allowed in the summation and the intersection over an empty index set is $\Omega$.
Let now $A_i$ be the random event $\{X_i \text{ is a vertex of } \Pi\}$, for $i=1,\ldots,n$. Recall that $\cF_0(\Pi)$ denotes the set of vertices of $\Pi$. Then, \eqref{eq:buchta_incl_excl} becomes what can be considered as a pointwise version of Buchta's identity
\begin{equation} \label{eq:buchta_pointwise}
\ind_{\{\cF_0(\Pi) = \{X_1,\ldots,X_l\}\}}
=  \sum_{J\subset\{1,\ldots,l\}} (-1)^{l-\#J} \ind_{\{X_i, i\notin J, \text{ are not vertices of } \Pi\}}.
\end{equation}
Taking the expectation on both sides of~\eqref{eq:buchta_pointwise} yields~\eqref{eq:buchta} because by exchangeability,
$$
\P[N_n = l] = \binom nl \E \ind_{\{\cF_0(\Pi) = \{X_1,\ldots,X_l\}\}}
$$
and for every $J\subset\{1,\ldots,l\}$ with $\#J=j$,
\begin{align*}
\P [X_i, i\notin J, \text{ are not vertices of } \Pi]
&=
\P[\{X_i\colon i\notin J\} \subset\conv(X_k\colon k\in J)]\\
&
=\E [M_{j}^{n-j}].
\end{align*}
To prove the latter identity note that the conditional probability that $\{X_i\colon i\notin J\} \subset\conv(X_k\colon k\in J)$  given $\{X_k\colon k\in J\}$ equals $M_j^{n-j}$. Unlike most proofs of the present paper, the above argument is almost purely combinatorial and does not rely on topological notions like the Euler characteristic.

\section{Proof of the generalized Euler relation}\label{sec:proof_euler}

In this section we prove Theorems~\ref{theo:euler_intersection} and~\ref{theo:euler_touch}. First we need to recall some facts about the extension of the Euler characteristic to the class of ro-polyhedra which is due to Groemer~\cite{groemer}; see also~\cite[Chapter~14.4]{schneider_weil_book}.

\subsection{Euler characteristic for ro-polyhedra}
A \textit{ro-polytope} is defined as a relative interior of some polytope.
Finite unions of ro-polytopes are called \textit{ro-polyhedra}. It is known (see~\cite{groemer} or~\cite{schneider_weil_book}, page 625, Theorem 14.4.5) that there is a unique function $\chi$ (the \textit{Euler characteristic})  defined on the family of ro-polyhedra in $\R^m$ and having the following properties:
\begin{enumerate}
\item[(a)] $\chi(\varnothing) =0$.
\item [(b)] $\chi$ is additive, that is $\chi(M\cup N) = \chi(M) + \chi(N) - \chi(M\cap N)$ for all ro-polyhedra $M$ and $N$.
\item[(c)] For a non-empty polytope $P$ we have
$\chi(P) = 1$ and  $\chi(\relint  P) = (-1)^{\dim P}$.
\end{enumerate}
For the proof of the following lemma we refer to Theorem~14.4.1 in~\cite{schneider_weil_book}.
\begin{lemma}\label{lem:chi_property}
If $A_1,\ldots,A_s\subset \R^m$ are ro-polyhedra such that  $\sum_{i=1}^s a_i \ind_{A_i}=0$ for some $a_1,\ldots,a_s\in\Z$, then
$$
\sum_{i=1}^s a_i \chi(A_i)=0.
$$
\end{lemma}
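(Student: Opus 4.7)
The plan is to reduce the identity to a cell-by-cell statement via a common polyhedral refinement. Once every $A_i$ is exhibited as a finite disjoint union of relative interiors of cells of a single polyhedral complex, the hypothesis $\sum_i a_i \ind_{A_i} = 0$ becomes a pointwise linear relation on each cell, and property (c) combined with finite additivity of $\chi$ then delivers the conclusion.

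To build the refinement I would first write each ro-polyhedron as $A_i = \bigcup_j \relint P_{ij}$ for finitely many polytopes $P_{ij}\subset\R^m$, and then collect every affine hyperplane in $\R^m$ needed to cut out each $P_{ij}$ as a finite intersection of closed half-spaces (including, when $\dim P_{ij}<m$, the hyperplanes whose intersection carves out $\aff P_{ij}$). The resulting finite hyperplane arrangement partitions $\R^m$ into the relative interiors of the cells of a polyhedral complex $\mathcal C$ with finitely many cells $C_1,\dots,C_N$. By construction, each $P_{ij}$, hence each $\relint P_{ij}$, and consequently each $A_i$, is a disjoint union of relative interiors of cells of $\mathcal C$, so
$$
\ind_{A_i} \;=\; \sum_{t=1}^N \eps_{it}\,\ind_{\relint C_t}, \qquad \eps_{it}\in\{0,1\}.
$$

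The relative interiors $\relint C_t$ are pairwise disjoint, so evaluating the hypothesis at a point of $\relint C_t$ gives $\sum_{i=1}^s a_i \eps_{it}=0$ for every $t$. Applying properties (a) and (b) inductively to disjoint unions yields the additivity $\chi(A_i)=\sum_{t=1}^N \eps_{it}\,\chi(\relint C_t)$, and property (c) identifies $\chi(\relint C_t)=(-1)^{\dim C_t}$. Swapping the order of summation,
$$
\sum_{i=1}^s a_i\,\chi(A_i) \;=\; \sum_{t=1}^N (-1)^{\dim C_t}\sum_{i=1}^s a_i\eps_{it} \;=\; 0,
$$
which is the claim.

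The only delicate step is the construction of $\mathcal C$: one must be sure that the chosen hyperplane arrangement really subdivides every $P_{ij}$ (including those of less than full dimension) into a subcomplex, so that $\relint P_{ij}$ is exactly a disjoint union of some $\relint C_t$ and not, say, a union that straddles cell boundaries. This is standard for finite hyperplane arrangements, but keeping track of cells of all dimensions simultaneously, rather than only the full-dimensional chambers, is where one has to be careful; once this polyhedral bookkeeping is in hand, the remainder of the argument is purely formal.
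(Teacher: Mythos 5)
Your proposal is correct, but it is worth noting that the paper does not actually prove this lemma: it simply refers to Theorem~14.4.1 in the book of Schneider and Weil, where the statement is essentially the well-definedness of $\chi$ as a linear functional on the vector space spanned by indicators of relatively open polytopes, established there by a rather different (inductive, coordinate-by-coordinate) construction of $\chi$ itself. What you do instead is take the existence of $\chi$ with properties (a)--(c) as given (exactly as the paper does) and derive the lemma from those three properties alone, via a common refinement: the hyperplane arrangement generated by all facet- and affine-hull-defining hyperplanes of the polytopes $P_{ij}$ partitions $\R^m$ into sign-vector cells, membership of a point in $\relint P_{ij}$ is determined by its sign vector, so each $A_i$ is a disjoint union of relatively open cells, and the pointwise hypothesis localizes to $\sum_i a_i\eps_{it}=0$ on each cell. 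This is a legitimate and self-contained alternative; what it buys is that the reader never needs to open Schneider--Weil, only to accept the axiomatic characterization of $\chi$ already quoted in the paper. Two small points you should make explicit: first, property (c) applies only to ro-\emph{polytopes}, i.e.\ bounded cells, so you should restrict attention to the cells contained in $\bigcup_{i,j}P_{ij}$ (all of which are bounded, and the remaining cells carry $\eps_{it}=0$ for every $i$, so they contribute nothing anyway); second, the additivity $\chi(A_i)=\sum_t\eps_{it}\chi(\relint C_t)$ requires the routine induction from (a) and (b) that $\chi$ is additive over finite \emph{disjoint} unions, which you invoke but do not spell out. Neither is a gap, only bookkeeping.
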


\subsection{Proof of Theorem~\ref{theo:euler_intersection}}
Recall that $\cF_k(T)$ denotes the set of $k$-dimensional faces of the polytope $T$ and $\cF_{\bullet}(T)=\cup_{k=0}^{m} \cF_k(T)$ is the set of all faces of $T$. Note that $\cF_{m} (T)$ has exactly one element, namely $T$ itself.

We can represent the polytope $T$ as a disjoint union of its relatively open faces:
\begin{equation}\label{eq:polytope_union_faces}
T= \cup_{G\in \cF_\bullet(T)} \relint  G.
\end{equation}
It was observed by Nef~\cite{nef} that together with the properties of $\chi$ this immediately implies the Euler relation:
$$
1= \chi(T) = \sum_{G\in \cF_\bullet(T)} \chi(\relint  G) = \sum_{G\in \cF_\bullet(T)} (-1)^{\dim G}.
$$
The proof of Theorem~\ref{theo:euler_intersection} is more involved. A well-known corollary of the Euler relation, see~\cite[p.~627, Eq.~(14.64)]{schneider_weil_book} or~\cite[page~137]{gruenbaum_book},  states that for every face $G\in \cF_\bullet(T)$ other than $T$ itself,
\begin{equation}\label{eq:euler_generalized}
\sum_{F\in \cF_\bullet(T) \colon F\supset \relint  G} (-1)^{\dim F} = \sum_{F\in \cF_\bullet(T) \colon F\supset G} (-1)^{\dim F} =  0.
\end{equation}
From~\eqref{eq:polytope_union_faces} and~\eqref{eq:euler_generalized} we easily obtain an inclusion-exclusion relation  for the indicator function of the interior of $T$
$$
\ind_{\interior  T} = \sum_{k=0}^m \sum_{F\in \cF_{m-k}(T)} (-1)^k \ind_{F}
$$
which holds pointwise. Multiplying both sides by $\ind_L$ and replacing $k$ by $m-k$ we obtain
$$
\ind_{L\cap \interior T} = \sum_{k=0}^m \sum_{F\in \cF_{k}(T)} (-1)^{m-k} \ind_{F\cap L}.
$$
By Lemma~\ref{lem:chi_property}, this implies that
\begin{equation}\label{eq:tech1}
\chi(L\cap \interior T) = \sum_{k=0}^m \sum_{F\in \cF_{k}(T)} (-1)^{m-k} \chi(F\cap L).
\end{equation}
The left-hand side of~\eqref{eq:tech1} equals
$$
\chi(L\cap \interior  T) =
\begin{cases}
(-1)^{m-d}, &\text{if } L\cap \interior T \neq \varnothing,\\
0, &\text{if } L\cap \interior T =\varnothing
\end{cases}
$$
because $L\cap \interior T$ is either an $(m-d)$-dimensional ro-polytope (in which case its Euler characteristic equals $(-1)^{m-d}$) or empty (in which case the Euler characteristic vanishes).  As for the right-hand side of~\eqref{eq:tech1}, any term $\chi(F\cap L)$ is either zero (if $F\cap L = \varnothing$) or $1$ (if $F\cap L$ is a non-empty polytope), hence
$$
\sum_{k=0}^m \sum_{F\in \cF_{k}(T)} (-1)^{m-k} \chi(F\cap L)
=
\sum_{k=0}^m (-1)^{m-k} a_{k}.
$$
Taking everything together, we obtain the required relation.

\subsection{Proof of Theorem~\ref{theo:euler_touch}}
As in the previous proof, start with the relation
$$
\ind_{\relint  T_1} = \sum_{k=0}^m \sum_{F\in \cF_{m-k}(T_1)} (-1)^k \ind_{F}.
$$
Multiplying both sides by $\ind_{T_2}$, using $(\relint T_1) \cap T_2 =\varnothing$, and substituting $m-k$ for $k$, we infer
$$
0 = \sum_{k=0}^m \sum_{F\in \cF_{k}(T_1)} (-1)^k \ind_{F \cap T_2}.
$$
By Lemma~\ref{lem:chi_property} this implies
$$
\sum_{k=0}^m \sum_{F\in \cF_{k}(T_1)} (-1)^k \chi(F \cap T_2) = 0.
$$
Now observe that $F\cap T_2$ is a polytope which may be empty or not, hence
$$
\chi(F\cap T_2)
=
\begin{cases}
1,&\text{if } F\cap T_2 \neq \varnothing,\\
0, &\text{if } F\cap T_2 = \varnothing.
\end{cases}
$$
It follows that
$$
\sum_{k=0}^m (-1)^k  \#\{F\in \cF_{k}(T_1)\colon F\cap T_2 \neq \varnothing \} = 0,
$$
which completes the proof.

\section{Proofs of the inclusion-exclusion formulas}
\subsection{Proof of Theorem~\ref{theo:cowan}}\label{subsec:proof_cowan}
Without loss of generality we assume that $\dim \Pi=d$, otherwise we could replace $\R^d$ by the affine hull of $X_1,\ldots,X_n$.
Define a linear operator $A:\R^n \to \R^d$ by
$$
Ae_1 = X_1,\ldots, Ae_n=X_n,
$$
where $e_1,\ldots,e_n$ is the standard basis of $\R^n$.
Consider the standard $(n-1)$-dimensional simplex
$$
S:= \{(\alpha_1,\ldots,\alpha_n)\in [0,\infty)^n \colon \alpha_1+\ldots+\alpha_n =1\}\subset \R^n.
$$
The $(k-1)$-dimensional faces of $S$ have the form
$$
S_{i_1,\ldots,i_k} = \{(\alpha_1,\ldots,\alpha_n)\in S \colon \alpha_i=0 \text{ for all } i\notin \{i_1,\ldots, i_k\}\},
$$
where $1\leq i_1<\ldots<i_k \leq n$ and $k\in \{1,\ldots,n\}$.
The next lemma provides an interpretation of Cowan's formula as a statement about the number of faces of the simplex $S$ intersected by the affine subspace $A^{-1}(X)$. A somewhat related idea was used in~\cite{kabl_vys_zaporozhets_weyl_chambers}.
\begin{lemma}\label{lem:lem}
For a point $X\in\R^d$ and any $1\leq i_1<\ldots<i_k \leq n$ the following statements are equivalent:
\begin{enumerate}
\item[(i)] $X\in \conv (X_{i_1}, \ldots, X_{i_k})$
\item[(ii)] $A^{-1}(X) \cap S_{i_1,\ldots, i_k}\neq \varnothing$.
\end{enumerate}
\end{lemma}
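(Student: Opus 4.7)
The plan is to prove the equivalence by unwinding the definitions on both sides; this lemma is essentially a tautology once one interprets the linear map $A$ correctly, so the proof will be short and I do not anticipate any genuine obstacle.

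First I would show (i) $\Rightarrow$ (ii). Assume $X \in \conv(X_{i_1}, \ldots, X_{i_k})$, so that there exist nonnegative numbers $\beta_1, \ldots, \beta_k$ with $\beta_1 + \cdots + \beta_k = 1$ and $X = \sum_{j=1}^k \beta_j X_{i_j}$. I would then construct the candidate preimage $\alpha \in \R^n$ by setting $\alpha_{i_j} := \beta_j$ for $j = 1, \ldots, k$ and $\alpha_i := 0$ for $i \notin \{i_1, \ldots, i_k\}$. By construction $\alpha \in S_{i_1,\ldots,i_k}$, since its coordinates are nonnegative, sum to one, and vanish outside $\{i_1,\ldots,i_k\}$. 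Moreover, using the linearity of $A$ and $A e_i = X_i$,
\begin{equation*}
A \alpha = \sum_{i=1}^n \alpha_i X_i = \sum_{j=1}^k \beta_j X_{i_j} = X,
\end{equation*}
so $\alpha \in A^{-1}(X) \cap S_{i_1,\ldots,i_k}$, which is therefore non-empty.

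Next I would establish the reverse implication (ii) $\Rightarrow$ (i). Pick any $\alpha = (\alpha_1,\ldots,\alpha_n) \in A^{-1}(X) \cap S_{i_1,\ldots,i_k}$. Membership in $S_{i_1,\ldots,i_k}$ gives $\alpha_i \geq 0$ for all $i$, $\sum_i \alpha_i = 1$, and $\alpha_i = 0$ for $i \notin \{i_1, \ldots, i_k\}$; in particular $\alpha_{i_1}, \ldots, \alpha_{i_k}$ are nonnegative and sum to one. Membership in $A^{-1}(X)$ gives
\begin{equation*}
X = A\alpha = \sum_{i=1}^n \alpha_i X_i = \sum_{j=1}^k \alpha_{i_j} X_{i_j},
\end{equation*}
which exhibits $X$ as a convex combination of $X_{i_1}, \ldots, X_{i_k}$, hence $X \in \conv(X_{i_1}, \ldots, X_{i_k})$. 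The two implications together give the claimed equivalence. The only step worth being careful about is ensuring that nothing but the bookkeeping between the two presentations of convex coefficients is used, which is why no topological or dimension-theoretic input is needed here.
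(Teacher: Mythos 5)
Your proof is correct and follows essentially the same route as the paper: the paper also just unwinds the definition of $A$ and of $S_{i_1,\ldots,i_k}$ to translate between points of $A^{-1}(X)\cap S_{i_1,\ldots,i_k}$ and convex-combination coefficients exhibiting $X\in\conv(X_{i_1},\ldots,X_{i_k})$. You merely spell out the two implications separately where the paper phrases it as a single chain of equivalences.
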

\begin{proof}
The set $A^{-1}(X) \cap S_{i_1,\ldots, i_k}$ is non-empty if and only if there exist $(\alpha_1,\ldots,\alpha_n)\in S$ such that $\alpha_i=0$ for all $i\notin \{i_1,\ldots, i_k\}$  and
$$
A(\alpha_1e_1+\ldots +\alpha_ne_n) = X.
$$
But in view of the definition of $A$ this means that
$$
\alpha_{i_1} X_{i_1} + \ldots + \alpha_{i_k} X_{i_k} = X,
$$
which is equivalent to $X\in \conv (X_{i_1}, \ldots, X_{i_k})$.
\end{proof}

We proceed to the proof of Theorem~\ref{theo:cowan}. Fix some $X\in\R^d$.
We are going to apply Theorem~\ref{theo:euler_intersection} to the polytope $T := S$ located in the hyperplane $H\colon \alpha_1+\ldots+\alpha_n=1$ (which we identify with $\R^{n-1}$, so that $m=n-1$) and the affine subspace $L := A^{-1}(X)\cap H$.

We argue that the codimension of $L$ in the hyperplane $H$ equals $d$.  Recall that we assume that $\Pi=\conv(X_1,\ldots,X_n)$ has full dimension $d$. Hence, the vectors $X_2-X_1,\ldots, X_n-X_1$ span the whole $\R^d$. This means that for every $x\in\R^d$ there is a real solution $(v_2,\ldots,v_n)$ to $v_2(X_2-X_1)+\ldots+v_n(X_n-X_1)=x$. Defining $v_1=-(v_2+\ldots+v_n)$ and $v=(v_1,\ldots,v_n)$ we obtain a solution $v$ to $A v = x$ in the hyperplane $H_0\colon  v_1+\ldots+v_n=0$. We have $AH_0 = \R^d$, hence $AH=\R^d$ and therefore $L=A^{-1}(X)\cap H$ has codimension $d$ in $H$.

By Lemma~\ref{lem:lem}, $c_k(X)$ defined in~\eqref{eq:def_c_k} equals  $a_{k-1}$, the number of $(k-1)$-dimensional faces of $S$ which are intersected by $L$.  Hence, by Theorem~\ref{theo:euler_intersection} we obtain that
$$
\sum_{k=1}^n (-1)^{k-1} c_k(X) = \sum_{j=0}^{n-1} (-1)^{j} a_j =
\begin{cases}
(-1)^{d}, &\text{if } L\cap \relint S\neq \varnothing,\\
0, &\text{if } L\cap \relint S = \varnothing.
\end{cases}
$$

It remains to prove that $L\cap \relint S\neq \varnothing$ if and only if $X\in \relint \Pi$. Indeed, $X$ is in the relative interior of $\conv (X_1,\ldots,X_n)$ if and only if there is a strictly positive tuple $(\alpha_1,\ldots,\alpha_n)\in S$ such that $\alpha_1X_1+\ldots+\alpha_n X_n =X$; see~\cite[Theorem 1.1.14]{schneider_book}. But this is equivalent to  $(\alpha_1,\ldots,\alpha_n) \in L\cap \relint S$.


\subsection{Proof of Theorem~\ref{theo:cowan_dual}}\label{subsec:proof_dual_cowan}
Applying Theorem~\ref{theo:cowan} in its indicator functions version~\eqref{eq:cowan_indicator} to every $\Pi_I$ and interchanging the order of summation, we obtain
\begin{align*}
\sum_{\varnothing\neq I\subset\{1,\ldots,n\}} (-1)^{\# I -1}  (-1)^{\dim \Pi_I}\ind_{\relint  \Pi_I}
&=
\sum_{\varnothing\neq I\subset\{1,\ldots,n\}} (-1)^{\# I -1}  \sum_{\varnothing \neq J\subset I} (-1)^{\#J-1} \ind_{\Pi_J}\\
&=
\sum_{\varnothing\neq J\subset\{1,\ldots,n\}} (-1)^{\# J -1} \ind_{\Pi_J} \sum_{I\supset  J} (-1)^{\#I-1}\\
&=
\ind_{\Pi},
\end{align*}
where in the last step we used that
$$
\sum_{I\supset  J} (-1)^{\#I-1} =
\begin{cases}
(-1)^{n-1}, &\text{if } J=\{1,\ldots,n\},\\
0, &\text{otherwise}.
\end{cases}
$$

\subsection{Proof of Theorem~\ref{theo:cowan_affine}}
We can assume that $F$ is a proper affine subspace because the case when $F$ is a point was treated in Theorem~\ref{theo:cowan}, while the case $F=\R^n$ reduces to the identity
$$
\sum_{k=1}^n (-1)^{k-1} \binom nk = 1.
$$

Since the problem is invariant under simultaneous translations of $X_1,\ldots,X_n$ and $F$, we can assume that $F$ contains the origin.
Denote by $S:\R^d\to F^{\bot}$ the projection on $F^{\bot}$,  the orthogonal complement of $F$.
The idea is to apply Cowan's formula~\eqref{eq:cowan} to the projected points $S(X_1),\ldots,S(X_n)$.
Note that for any set $I\subset\{1,\ldots,n\}$  the convex hull of $(X_{i}\colon i\in I)$  intersects $F$ if and only if the convex hull of $(S(X_i)\colon i\in I)$ contains the origin. This is true since convex hulls are preserved under linear maps.  We obtain
\begin{align*}
\sum_{k=1}^n (-1)^{k-1} c_k(F)
&=
\sum_{k=1}^n (-1)^{k-1} \sum_{\substack{I\subset\{1,\ldots,n\}\\\#I=k}} \ind_{\{F\cap \Pi_I \neq \varnothing\}}\\
&=
\sum_{k=1}^n (-1)^{k-1} \sum_{\substack{I\subset\{1,\ldots,n\}\\\#I=k}} \ind_{\{0\in \conv(SX_i\colon i\in I)\}}\\
&=
\begin{cases}
(-1)^{d-\dim F}, &\text{if } 0\in \relint  S(\Pi),\\
0, &\text{if } 0\notin \relint  S(\Pi),
\end{cases}
\end{align*}
where the last step is by Theorem~\ref{theo:cowan} applied to the points $SX_1,\ldots,SX_n\in F^{\bot}$.  To complete the proof, note that the origin is in the relative interior of $S(\Pi) = \conv(S(X_1),\ldots, S(X_n))$ if and only if $F$ intersects the interior of $\Pi$. This is true because the projection of the interior of $\Pi$ is the relative interior of $S(\Pi)$.



\subsection{Proof of Theorem~\ref{theo:inclusion_exclusion_faces_clean}}
The idea is as follows: we will show that, essentially, $\Pi_I$ is a face of $\Pi_J$  if and only if the affine subspace spanned by $\Pi_I$ is not intersected by $\Pi_{J\backslash I}$. After that, we can apply Theorem~\ref{theo:cowan_affine}.

Denote by $\aff B$ the affine hull of a collection of points $B$.
\begin{lemma}\label{lem:clean_face}
For non-empty sets $I\subset J\subset\{1,\ldots,n\}$ the following two conditions are equivalent:
\begin{enumerate}
\item[(i)] $\Pi_I$ is a clean face of $\Pi_J$
\item[(ii)] $\aff (X_i\colon  i\in I) \cap \conv(X_k\colon k\in J\backslash I) = \varnothing$.
\end{enumerate}
\end{lemma}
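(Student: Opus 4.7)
The plan is to establish both implications via a supporting/separating hyperplane argument applied to the affine hull $\aff(X_i\colon i\in I)$ and the compact set $\conv(X_k\colon k\in J\setminus I)$.

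\textbf{Direction (i) $\Rightarrow$ (ii).} Assume $\Pi_I$ is a clean face of $\Pi_J$. If $I=J$, then $J\setminus I=\varnothing$ and the statement is vacuous. Otherwise $\Pi_I$ is a proper face, so there is a supporting hyperplane $H$ of $\Pi_J$ with $\Pi_J\cap H=\Pi_I$ and $\Pi_J$ lying in the corresponding closed halfspace. Since $X_i\in\Pi_I\subset H$ for every $i\in I$, we have $\aff(X_i\colon i\in I)\subseteq H$. For $k\in J\setminus I$, the inclusion $X_k\in H$ would force $X_k\in \Pi_J\cap H=\Pi_I$, contradicting cleanness. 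Hence every such $X_k$ lies strictly on one side of $H$, so $\conv(X_k\colon k\in J\setminus I)$ is contained in the open halfspace and is disjoint from $H$, and in particular from $\aff(X_i\colon i\in I)$.

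\textbf{Direction (ii) $\Rightarrow$ (i).} If $I=J$ there is nothing to prove; otherwise $\conv(X_k\colon k\in J\setminus I)$ is non-empty, compact, and convex, while $\aff(X_i\colon i\in I)$ is closed and convex. By (ii) they are disjoint, so the strict separation theorem yields $u\in\R^d$ and $c_1<c_2$ with $\langle u,\cdot\rangle\leq c_1$ on the convex hull and $\langle u,\cdot\rangle\geq c_2$ on the affine hull. A linear functional bounded below on an affine set must be constant there (otherwise it would attain arbitrarily negative values), so $\langle u,\cdot\rangle\equiv c_2$ on $\aff(X_i\colon i\in I)$. Put $H:=\{\langle u,\cdot\rangle=c_2\}$. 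Then $\Pi_J\subseteq\{\langle u,\cdot\rangle\leq c_2\}$, so $H$ supports $\Pi_J$. If $x=\sum_{i\in J}\lambda_i X_i\in \Pi_J\cap H$, writing out $\langle u,x\rangle=c_2$ and using $\langle u,X_k\rangle\leq c_1<c_2$ for $k\in J\setminus I$ forces $\lambda_k=0$ for every such $k$, so $x\in\Pi_I$. Thus $H\cap\Pi_J=\Pi_I$, so $\Pi_I$ is a face of $\Pi_J$; and the strict inequality $\langle u,X_k\rangle<c_2$ for $k\in J\setminus I$ means no such $X_k$ lies in $H\supseteq \Pi_I$, which is exactly cleanness.

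\textbf{Main obstacle.} The delicate point is direction (ii) $\Rightarrow$ (i): one must choose the separating hyperplane so as to \emph{contain} $\aff(X_i\colon i\in I)$ rather than merely separate the two sets. The observation that the separating functional is automatically constant on the affine factor, combined with the strict gap $c_2-c_1>0$ on the convex hull side, is precisely what forces the resulting hyperplane to cut out exactly $\Pi_I$ (not a larger face) and simultaneously witness cleanness.
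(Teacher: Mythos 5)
Your proof is correct. The direction (i) $\Rightarrow$ (ii) is essentially the paper's argument: take a supporting hyperplane $H$ with $H\cap\Pi_J=\Pi_I$, observe that cleanness pushes every $X_k$, $k\in J\backslash I$, strictly off $H$, and conclude. (One small point you gloss over, as does the paper: for $I\subsetneq J$ you assert that $\Pi_I$ is a \emph{proper} face, which needs cleanness --- if some $X_k$ with $k\in J\backslash I$ lay in $\Pi_I$ you could have $\Pi_I=\Pi_J$; cleanness rules this out in one line.) The direction (ii) $\Rightarrow$ (i) is where you genuinely diverge. The paper argues by contraposition with bare convex-combination algebra: assuming $\Pi_I$ is not a face in the extreme-set sense, it writes a point of $\Pi_I$ as a nontrivial convex combination involving a point outside $\Pi_I$, and rearranges the coefficients to manufacture an explicit point in $\aff(X_i\colon i\in I)\cap\conv(X_k\colon k\in J\backslash I)$, contradicting (ii). You instead apply the strict separation theorem to the disjoint pair (closed affine set, compact convex set), note that a functional bounded below on an affine set is constant on it (so after renaming the constant, your level set $H$ contains $\aff(X_i\colon i\in I)$), and then read off directly that $H\cap\Pi_J=\Pi_I$ and that no $X_k$, $k\in J\backslash I$, lies in $H$. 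Your route is more conceptual and constructs the face-defining hyperplane explicitly, delivering faceness and cleanness in one stroke; the paper's route is more elementary and self-contained (no separation theorem, only manipulation of convex combinations), at the cost of a contradiction argument and a separate remark that (ii) also yields cleanness. The only cosmetic slip is writing $\langle u,\cdot\rangle\equiv c_2$ on the affine hull when the constant value is a priori only some $c\geq c_2$; since $c>c_1$ still holds, renaming $c_2:=c$ fixes this without affecting anything.
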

\begin{proof}[Proof of (i) $\Rightarrow$ (ii)]
Let $\Pi_I$ be a clean face of $\Pi_J$. By the definition of a face, there is an affine hyperplane $H$ separating $\R^d$ into two closed half-spaces $H_+$ and $H_-$ such that $H\cap \Pi_J = \Pi_I$ and $\Pi_J\subset H_+$. Since $\Pi_I$ is clean, we have $\Pi_I\cap (X_k\colon k\in J\backslash I)=\varnothing$ and, consequently, $H\cap (X_k\colon k\in J\backslash I) = \varnothing$. It follows that $(X_k\colon k\in J\backslash I)$ is contained in the interior of $H_+$. Hence, $\conv(X_k\colon k\in J\backslash I)$ does not intersect $H$. Since $\aff (X_i\colon  i\in I)$ is a subset of $H$, we obtain (ii).
\end{proof}
\begin{proof}[Proof of $(ii)\Rightarrow (i)$]
We argue by contradiction. First we assume that $\Pi_I$ is not a face of $\Pi_J$. Then we can represent some point of $\Pi_I$ as a non-trivial convex combination of two points from $\Pi_J$ such that at least one of the points is not contained in $\Pi_I$. That is, we have
$$
\sum_{i\in I} \alpha_i X_i = \lambda \sum_{k\in J} \beta_k X_k + (1-\lambda) \sum_{k\in J} \gamma_k X_k,
$$
where $\lambda\in (0,1)$,  and $(\alpha_i)_{i\in I}$, $(\beta_k)_{k\in J}$, $(\gamma_k)_{k\in J}$ are collections of non-negative numbers summing up to $1$.
Moreover, we can assume that, say,  $\sum_{k\in J} \beta_k X_k\notin \Pi_I$. This implies that for at least one $k_0\in J\backslash I$ we have $\beta_{k_0} > 0$. Then, we can write
$$
\frac{1}{C} \sum_{i\in I} (\alpha_i - \lambda \beta_i - (1-\lambda)\gamma_i) X_i =
\sum_{k\in J\backslash I} \frac{\lambda \beta_k + (1-\lambda) \gamma_k}{C}X_k,
$$
where $C:= \sum_{l\in J\backslash I} (\lambda \beta_l + (1-\lambda) \gamma_l) > \lambda \beta_{k_0} > 0$. The left-hand side belongs to $\aff(X_i \colon i\in I)$, whereas the right-hand side belongs to $\conv(X_k\colon k\in J\backslash I)$, a contradiction. Hence, $\Pi_I$ is a face of $\Pi_J$. In fact, condition (ii) implies that $\Pi_I \cap (X_k\colon k\in J\backslash I)=\varnothing$, hence $\Pi_I$ is a clean face of $\Pi_J$.
\end{proof}

\begin{proof}[Proof of Theorem~\ref{theo:inclusion_exclusion_faces_clean}]
Without loss of generality, we assume that the affine hull of $(X_1,\ldots,X_n)$ is $\R^d$ (equivalently, $\dim \Pi=d$). Otherwise, consider this affine hull instead of $\R^d$. 
For a non-empty set $I\subset\{1,\ldots,n\}$ write
$$
S^*(I) = \{ J\subset\{1,\ldots,n\} \colon J\supset  I, \Pi_I \text{ is a clean face of } \Pi_J\}.
$$
Let $L=L_I$ be the affine hull of $\{X_i\colon i\in I\}$. Consider
\begin{align*}
N
&:= \sum_{K\subset\{1,\ldots,n\}} (-1)^{\#K} \ind_{\{\Pi_K \cap L = \varnothing\}}\\
&= \sum_{\substack{K\subset\{1,\ldots,n\}\\K\cap I = \varnothing}} (-1)^{\#K} \ind_{\{\Pi_K \cap L = \varnothing\}}\\
&= \sum_{\substack{J\subset\{1,\ldots,n\}\\J\supset  I}} (-1)^{\#J-\#I} \ind_{\{\Pi_{J\backslash I} \cap L = \varnothing\}},
\end{align*}
where in the last equality we denoted by $J$ the disjoint union of $I$ and $K$.
Applying Lemma~\ref{lem:clean_face}, we obtain
\begin{align*}
N
&= \sum_{\substack{J\subset\{1,\ldots,n\}\\J\supset  I}} (-1)^{\#I - \#J} \ind_{\{J\in S^*(I)\}}\\
&= (-1)^{\#I} \sum_{J\in S^*(I)} (-1)^{\#J}\\
&=(-1)^{\#I-1} \sum_{j=\#I}^n (-1)^{j-1} b_j^*(I).
\end{align*}
On the other hand, by Theorem~\ref{theo:cowan_affine},
\begin{align*}
N
&= \sum_{K\subset\{1,\ldots,n\}} (-1)^{\#K} - \sum_{K\subset\{1,\ldots,n\}} (-1)^{\#K} \ind_{\{\Pi_K \cap L \neq  \varnothing\}}\\
&= \sum_{K\subset\{1,\ldots,n\}} (-1)^{\#K-1} \ind_{\{\Pi_K \cap L \neq  \varnothing\}}\\
&=
\begin{cases}
(-1)^{d-\dim \Pi_I}, &\text{if } L\cap \interior  \Pi \neq \varnothing,\\
0,               &\text{if } L\cap \interior  \Pi = \varnothing.
\end{cases}
\end{align*}
Comparing these two formulas for $N$, we obtain
$$
\sum_{j=\#I}^n (-1)^{j-1} b_j^*(I) =
\begin{cases}
(-1)^{d+ \#I -1-\dim \Pi_I}, &\text{if } L\cap \interior  \Pi \neq \varnothing,\\
0,               &\text{if } L\cap \interior  \Pi = \varnothing,
\end{cases}
$$
which completes the proof because $L\cap \interior \Pi = \varnothing$ if and only if $\Pi_I$ is a face of $\Pi$.
\end{proof}

\subsection{Proof of Theorem~\ref{theo:relation_for_f_r}}\label{subsec:proof_theo_relation_for_f_r}
For a set $I\subset\{1,\ldots,n\}$ write
$$
S(I) = \{ J\subset\{1,\ldots,n\} \colon J\supset  I, \Pi_I \text{ is a face of } \Pi_J\}.
$$
The conditions we imposed on $X_1,\ldots,X_n$ imply that every $r$-dimensional face of $\Pi_J$ must have a form $\Pi_I$ for a unique $I\subset J$ with $\#I=r+1$. Using Theorem~\ref{theo:inclusion_exclusion_faces}, we get
\begin{align*}
\sum_{\varnothing\neq J\subset\{1,\ldots,n\}} (-1)^{\#J-1} f_r(\Pi_J)
&=
\sum_{\substack{I\subset\{1,\ldots,n\}\\ \#I=r+1}} \sum_{J\in S(I)} (-1)^{\#J-1}\\
&=
(-1)^d \sum_{\substack{I\subset\{1,\ldots,n\}\\ \#I=r+1}} \ind_{\{\Pi_I \text{ is a not a face of } \Pi\}} \\
&=
(-1)^d \left(\binom {n}{r+1} - f_r(\Pi)\right),
\end{align*}
where the last step holds because there are $\binom {n}{r+1}$ subsets $I$ with $\#I=r+1$.


\bibliographystyle{alpha}
\bibliography{cowan_bib}

\end{document}